\documentclass[11pt]{amsart}

\usepackage[T1]{fontenc}
\usepackage[utf8]{inputenc}
\usepackage{amsmath,amssymb,amsthm,mathtools}
\usepackage{enumitem}
\usepackage[margin=1.1in]{geometry}
\usepackage{xurl}
\usepackage[colorlinks=true,linkcolor=blue,citecolor=blue,urlcolor=blue]{hyperref}

\newtheorem{theorem}{Theorem}[section]
\newtheorem{proposition}[theorem]{Proposition}
\newtheorem{lemma}[theorem]{Lemma}
\newtheorem{corollary}[theorem]{Corollary}
\newtheorem{definition}[theorem]{Definition}

\newcommand{\Z}{\mathbb Z}

\newcommand{\Th}{\operatorname{Th}}
\newcommand{\Bur}{\overline B}
\newcommand{\dd}{\doteq}
\newcommand{\codeurl}{\url{https://github.com/saurabh-suman2/fox_conjecture_Th4q}}

\title[Fox's conjecture for $\Th(4,q)$]
{Fox's trapezoidal conjecture for four-strand Turk's head knots and links}

\author{Suman Saurabh}
\email{realsumansaurabh@gmail.com}

\subjclass[2020]{57K10, 57K14; 05A20}
\keywords{Alexander polynomial, Fox's trapezoidal conjecture, Turk's head knots, Turk's head links, Burau representation, log-concavity}

\begin{document}

\begin{abstract}
We prove Fox's trapezoidal conjecture for the four-strand Turk's head knots and links $\Th(4,q)$, for all $q\geq 1$.  Equivalently, we show that the absolute values of the coefficients of the one-variable Alexander polynomial of $\Th(4,q)$ form a trapezoidal sequence.  The proof begins with a uniform Burau factorization for the closures of $(\sigma_1\sigma_2^{-1}\sigma_3)^q$, which expresses the Alexander polynomial in terms of reciprocal quadratic factors indexed by \(q\)-th roots of unity.  The odd and even exponent cases then follow from a common log-concavity argument based on a four-block smoothing theorem for reciprocal quartic factors.

\end{abstract}

\maketitle

\section{Introduction}

The Alexander polynomial of an alternating knot or link has a highly constrained sign pattern, going back to the work of Crowell and Murasugi \cite{crowell,murasugi}.  Fox \cite{fox} conjectured that, beyond this sign alternation, the absolute values of its coefficients form a trapezoidal sequence.  The conjecture is open in general, but it is known for several important classes, including two-bridge knots \cite{hartley}, alternating algebraic knots \cite{murasugi-algebraic}, genus-two alternating knots \cite{ozsvath-szabo,jong1,jong2}, stable alternating knots and links \cite{hirasawa-murasugi}. Stoimenow proposed the stronger log-concavity form of Fox's
conjecture \cite{stoimenow-newton}: after the usual alternating-sign
normalization, the absolute values of the Alexander coefficients of an
alternating knot should form a log-concave sequence.  This strong form
is known for two-bridge knots by Banfield \cite{banfield} and for
special alternating links by Hafner--Mészáros--Vidinas
\cite{hafner-meszaros-vidinas}.  The latter proof uses a Lorentzian
multivariate lift of the Alexander polynomial, building on the theory
of Brändén--Huh \cite{branden-huh}.  Our proof is different: the
Burau factorization reduces the four-strand Turk's head family to
one-variable products of reciprocal quadratic factors, and the
resulting quartic pair-blocks are not uniformly log-concave.  Thus the
usual convolution-closure and Newton-inequality approaches do not
directly apply; the missing estimate is supplied by the four-block smoothing theorem below. We use the alternating-link formulation of the conjecture, as in recent work of Azarpendar, Juh\'asz, and K\'alm\'an \cite{azarpendar-juhasz-kalman}.  For background on Fox's conjecture and its known cases, see also Chbili's survey \cite{chbili-survey}.

Turk's head knots and links form a natural test family for this problem, as emphasized in the survey of Di Prisa and \c{S}avk \cite{diprisa-savk}.  In the three-strand/weaving case, the Alexander polynomials and the Fox trapezoidal property have been studied by AlSukaiti and Chbili \cite{alsukaiti-chbili-weaving}; related results for closed alternating three-braids and more general closed alternating braids appear in \cite{alrefai-chbili,alsukaiti-chbili-altbraids}.  Di Prisa and \c{S}avk formulate the verification of Fox's conjecture for Turk's head knots and links as an explicit problem.  In the present paper we prove the conjecture for the complete four-strand Turk's head family
\[
        \Th(4,q)=\widehat{(\sigma_1\sigma_2^{-1}\sigma_3)^q},
        \qquad q\ge1.
\]
When \(q\) is odd, \(\gcd(4,q)=1\), and \(\Th(4,q)\) is a knot.  When \(q\) is even, it is a link with \(\gcd(4,q)\) components.  Thus the theorem below treats both the knot and link cases simultaneously.

\begin{theorem}[Main theorem]\label{thm:global-intro}
For every \(q\ge1\), the absolute values of the coefficients of the one-variable Alexander polynomial of \(\Th(4,q)\) form a trapezoidal sequence.  More precisely, after the substitution \(t=-z\) and multiplication by a Laurent unit, \(\Delta_{\Th(4,q)}(t)\) becomes a reciprocal polynomial in \(z\) with positive log-concave coefficient sequence.
\end{theorem}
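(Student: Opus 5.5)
The plan is to compute $\Delta_{\Th(4,q)}$ explicitly through the reduced Burau representation and then prove log-concavity of the resulting product directly. Write $\beta=\sigma_1\sigma_2^{-1}\sigma_3$ and let $\Bur(\beta)$ be its $3\times3$ reduced Burau matrix over $\Z[t^{\pm1}]$. The standard formula is
\[
\Delta_{\widehat{\beta^q}}(t)\dd \frac{1-t}{1-t^4}\det\bigl(I-\Bur(\beta)^q\bigr).
\]
To evaluate $\det(I-\Bur(\beta)^q)$, diagonalise $\Bur(\beta)$ over an algebraic closure. Its characteristic polynomial is a cubic $\chi(\lambda)$ in $\lambda$ with coefficients in $\Z[t^{\pm1}]$. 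Then
\[
\det\bigl(I-\Bur(\beta)^q\bigr)=\prod_{\omega^q=1}\chi(\omega)
\]
up to sign. We expect $\chi$ to split off a linear factor, or, after symmetrising $\beta$, to have a shape in which the resulting product organises into factors
\[
Q_\omega(t)=t^2-c_\omega t+1,\qquad c_\omega\in\mathbb R,
\]
one for each $q$-th root of unity, with $\omega$ and $\bar\omega$ paired. The prefactor $(1-t)/(1-t^4)$ should cancel the factors attached to the trivial root, and, when $q$ is even, those attached to $\omega=-1$. This case analysis is where the knot and link cases separate. After substituting $t=-z$, each quadratic becomes $z^2+c'_\omega z+1$. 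The $c'_\omega$ should be trigonometric expressions of the form $a+b\cos(2\pi k/q)$, and we will show they are all positive. Reciprocity of the product is then automatic.

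The combinatorial core comes next. A single reciprocal quadratic $z^2+cz+1$ is log-concave only when $c\ge1$, and products of log-concave sequences with no internal zeros are log-concave. So if every $c'_\omega\ge1$ we are done immediately. We do not expect this to hold: the factors for $\omega$ near $-1$ will presumably have small $c'$. That is the failure of uniform log-concavity the introduction warns about. We therefore group conjugate pairs into reciprocal quartics
\[
P_k(z)=(z^2+c_kz+1)(z^2+c_{q-k}z+1),
\]
and sort them by the size of the middle coefficient. The four-block smoothing theorem says, roughly, that a product of suitably constrained reciprocal quartic blocks is log-concave. We will invoke it in that form, so the work is to check its hypotheses. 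Concretely, we expect to verify explicit inequalities among the quartic coefficients $(1,a_k,b_k,a_k,1)$. These should reduce to trigonometric inequalities in $\cos(2\pi k/q)$, and we will check them by elementary monotonicity arguments.

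Any leftover blocks are handled separately. These include an unpaired quadratic when $q$ is even, or small-$q$ exceptions where there are too few blocks to form groups of four. We plan to absorb them into neighbouring quartics that have large coefficients, since multiplying by a log-concave factor preserves log-concavity. For the finitely many small $q$ not covered, we will check directly by computing the polynomial.

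We expect the main obstacle to be the Burau computation: obtaining a clean closed form for $\chi$, and showing that the eigenvalue product really factors into real reciprocal quadratics with the stated positivity. Once that is in hand, the log-concavity reduces to checking the hypotheses of the four-block theorem, which is inequality bookkeeping.
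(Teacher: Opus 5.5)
Your overall architecture matches the paper's: a Burau factorization into reciprocal quadratics indexed by $q$-th roots of unity, a four-block smoothing theorem, Hoggar--Keilson--Gerber, and finitely many exact checks. However, the step that makes it work is missing, and the grouping you propose would fail. Conjugate roots give identical quadratics: $c_{q-k}=c_k$, where $c_\omega=2-\omega-\omega^{-1}=4\sin^2(\theta/2)$ for $\omega=e^{i\theta}$. So each of your blocks is a perfect square, $P_k=(1+c_kz+z^2)^2$. You also have the small coefficients in the wrong place: they sit near $\omega=1$, not near $\omega=-1$, where $c_\omega\approx4$. For $k$ small relative to $q$, a product of four of your blocks is a small perturbation of $(1+z^2)^8$. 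Its coefficients $(1,\varepsilon,8,\dots)$ violate $C_1^2\ge C_0C_2$. So no four-block statement can cover such blocks: the paper's version requires $0\le a,b\le4$ and $a+b\ge4$, i.e.\ $c_k\ge2$ for your $P_k$. Sorting the squares $P_k$ does not help by itself. The missing idea is the complementary pairing. Match $\omega$ with $-\bar\omega$ when $q$ is even, or with its neighbour $-e^{i\pi/q}\bar\omega$ when $q$ is odd, so that the two middle coefficients sum to exactly $4$, resp.\ more than $4$. Concretely, the paper writes $\Delta_{\Th(4,q)}(-z)\dd(1+\cdots+z^{2n})D_n^2$ for $q=2n+1$ and $\Delta_{\Th(4,q)}(-z)\dd(1+\cdots+z^{2n-1})(z^2+4z+1)E_n^2$ for $q=2n$. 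It pairs the $r$-th factor of $D_n$ with the $(n+1-r)$-th; the sum exceeds $4$ by $\sin^2\alpha+\sin^2\beta=1-\cos(\alpha+\beta)\cos(\alpha-\beta)$ with $\alpha+\beta>\pi/2$. It pairs the $r$-th factor of $E_n$ with the $(n-r)$-th, where the sum is exactly $4$. It then proves $D_n$ and $E_n$ log-concave, and only afterwards squares.

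You also misplace the difficulty. The Burau step is a direct $3\times3$ determinant, $\det(I-xM)=(1+tx)\bigl(tx^2+(t-1)^2x+t\bigr)/t$. The prefactor $(1-t)/(1-t^4)=1/\bigl((1+t)(1+t^2)\bigr)$ cancels exactly the two factors at $\omega=1$. Nothing at $\omega=-1$ cancels; for even $q$ its factor $z^2+4z+1$ survives, harmlessly. The real content is the four-block theorem, which you invoke as a black box with unspecified hypotheses and then call ``inequality bookkeeping.'' Width four is forced, since $P_{0,4}=1+4z+2z^2+4z^3+z^4$ and even $P_{0,4}^3$ fail log-concavity. The paper proves the theorem in four steps. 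It passes to $(\sigma,\eta)=(a+b-4,(4-a)(4-b))\in[0,4]^2$ and bounds $L_1\ge136$ by hand. It shows each margin $L_k$ ($2\le k\le8$) is coordinatewise nondecreasing on $[0,4]^8$, via an exact integer positive-orthant certificate. It then evaluates at the corner $Q_{0,0}^4$. Finally, absorbing a leftover block into a group of four gives a five-block product that the theorem does not cover. Hoggar's theorem only helps if each leftover is itself log-concave. The paper arranges this by leaving over the pairs nearest the middle, where $c_{r,n}\ge2/3$ (odd case), resp.\ $e_{r,n}\ge4$ (even case), and by checking $n\le15$, resp.\ $n\le9$, by exact computation.
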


The proof has three parts.  First, we evaluate the reduced Burau representation of the braid
\[
        \beta=\sigma_1\sigma_2^{-1}\sigma_3\in B_4.
\]
A direct determinant computation gives a uniform root-of-unity factorization for \(\Delta_{\widehat{\beta^q}}(t)\).  After substituting \(t=-z\), all factors are reciprocal quadratics
\[
        z^2+\left(2-\omega-\omega^{-1}\right)z+1,
        \qquad \omega^q=1,
\]
together with the geometric factor \(1+z+\cdots+z^{q-1}\).  Writing \(\omega=e^{i\theta}\), the middle coefficient becomes
\[
        2-\omega-\omega^{-1}=4\sin^2\frac\theta2.
\]
Thus the entire family is governed by the same one-parameter collection of reciprocal quadratics
\[
        Q_a(z)=1+az+z^2,
        \qquad 0\le a\le4.
\]

Second, we prove a four-block smoothing theorem.  The basic pair-block is
\[
        P_{a,b}(z)=Q_a(z)Q_b(z),
        \qquad 0\le a,b\le4,
        \qquad a+b\ge4.
\]
Individual pair-blocks in this region need not be log-concave.  For instance,
\[
        P_{0,4}(z)=(1+z^2)(1+4z+z^2)
        =1+4z+2z^2+4z^3+z^4,
\]
whose coefficient sequence \((1,4,2,4,1)\) fails log-concavity.  Moreover the defect is not removed by the first few convolutions: the coefficient sequence of \(P_{0,4}(z)^3\) still fails log-concavity at the middle.  Thus the usual convolution-closure theorem for log-concave sequences cannot be applied at the level of the individual blocks, and a naive induction through one-, two-, or three-block products breaks down.  Nor is this a real-rootedness problem accessible directly by Newton inequalities: the quadratics \(1+az+z^2\) have nonreal roots for \(0<a<2\), and in general the relevant reciprocal factors are not real-rooted.

The essential smoothing phenomenon is that every product of four such pair-blocks is strictly log-concave.  We prove this finite-width estimate by replacing the failed induction with an exact algebraic positivity certificate: after embedding the semialgebraic parameter region into a cube and then mapping the cube to the positive orthant, the required derivative inequalities reduce to positivity of explicit multivariable polynomials with integer coefficients.  The computer assistance is confined to this expansion and coefficient-positivity check: the required polynomials are computed over \(\mathbb Z\), their coefficients are inspected exactly, and no floating-point or numerical approximation is used.

Third, the odd and even exponents are handled by the same root-pairing geometry.  For odd \(q=2n+1\), the nontrivial roots occur in reciprocal pairs, and the natural pair-blocks satisfy \(a+b>4\).  For even \(q=2n\), the root \(\omega=-1\) is fixed by inversion and contributes the extra factor \(z^2+4z+1\); the remaining pair-blocks satisfy the boundary identity \(a+b=4\).  Hence the odd case is the interior case of the same smoothing theorem, and the even case is its boundary case.

An earlier version of this article treated only the odd-exponent knot
subfamily \(\Th(4,2n+1)\).  The present version is substantially
expanded: it adds the even-exponent link subfamily \(\Th(4,2n)\) and
reorganizes the proof around a uniform root-of-unity factorization for
\(\widehat{(\sigma_1\sigma_2^{-1}\sigma_3)^q}\).  It also incorporates
the spectral factorization method developed in the author's earlier
preprint \cite{SaurabhSpectral}.  The argument is self-contained and
proves Fox trapezoidality for all exponents \(q\geq 1\).

The paper is organized as follows.  Section~\ref{sec:prelim} records the elementary log-concavity tools and the normalization convention for the Alexander polynomial.  Section~\ref{sec:burau} proves the uniform Burau spectral factorization and derives the odd and even factorizations.  Section~\ref{sec:fourblock} proves the four-block smoothing theorem.  Section~\ref{sec:odd} proves log-concavity in the odd-exponent case.  Section~\ref{sec:even} proves log-concavity in the even-exponent case.  Section~\ref{sec:completion} completes the proof of Theorem~\ref{thm:global-intro}.  Appendix~\ref{app:cheb} records the Chebyshev forms used for exact finite verification.

\section{Preliminaries}\label{sec:prelim}

\subsection{Sequences}

\begin{definition}
A finite sequence \((a_0,\ldots,a_d)\) of nonnegative real numbers is \emph{log-concave} if
\[
        a_k^2\ge a_{k-1}a_{k+1}
        \qquad (1\le k\le d-1).
\]
It has \emph{no internal zeros} if there do not exist indices \(i<j<\ell\) such that \(a_i,a_\ell>0\) and \(a_j=0\).  A finite sequence is \emph{trapezoidal} if it is nondecreasing up to some interval, constant on that interval, and nonincreasing afterwards.
\end{definition}

\begin{theorem}[Hoggar--Keilson--Gerber]\label{thm:convolution}
Let \((a_k)\) and \((b_k)\) be nonnegative log-concave sequences with no internal zeros.  Then their convolution
\[
        c_k=\sum_i a_i b_{k-i}
\]
is log-concave and has no internal zeros.
\end{theorem}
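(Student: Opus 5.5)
The plan is to use the classical total-positivity argument: encode each sequence as a Toeplitz matrix, note that convolution becomes matrix multiplication, and use the Cauchy--Binet formula to show that the relevant \(2\times2\) minors stay nonnegative.

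First I extend the sequences by zero to all of \(\Z\) and form the Toeplitz matrices \(A=(a_{i-j})_{i,j\in\Z}\) and \(B=(b_{i-j})_{i,j\in\Z}\). Since the supports are finite, \(AB=(c_{i-j})\) is well defined, with every entry a finite sum. The intermediate claim I aim for is the following equivalence: a nonnegative finite sequence \((a_k)\) is log-concave with no internal zeros if and only if every \(2\times2\) minor of \(A\) is nonnegative, i.e.\ \(A\) is TP\(_2\).

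For the forward direction, take rows \(i<k\) and columns \(j<l\). The minor is \(a_{i-j}a_{k-l}-a_{i-l}a_{k-j}\). Put \(p=i-j\), \(q=k-l\), \(r=i-l\), \(s=k-j\). Then \(r<p,q<s\) and \(p+q=r+s\), so it suffices to show \(a_pa_q\ge a_ra_s\) in this configuration.
\begin{itemize}
\item If \(a_r=0\) or \(a_s=0\), the right side is zero and there is nothing to prove.
\item Otherwise, because there are no internal zeros, the whole interval \([r,s]\) lies in the support. Log-concavity then says the ratios \(\rho_m=a_{m+1}/a_m\) are nonincreasing on \([r,s-1]\).
\item Assume \(p\le q\) and write \(a_p/a_r=\prod_{m=r}^{p-1}\rho_m\) and \(a_s/a_q=\prod_{m=q}^{s-1}\rho_m\). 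Both products have \(p-r=s-q\) factors, and the factors of the first are termwise at least those of the second. This gives \(a_pa_q\ge a_ra_s\).
\end{itemize}
I do not need the converse in general; I only need to read off from TP\(_2\) of \(AB\) the conclusion about \((c_k)\).

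Next I apply the Cauchy--Binet formula to \(AB\), which is legitimate since only finitely many terms are nonzero. Each \(2\times2\) minor of \(AB\) is
\[
\sum_{m<n}\det A[\{i,k\},\{m,n\}]\,\det B[\{m,n\},\{j,l\}],
\]
a sum of products of nonnegative minors, so \(AB\) is TP\(_2\). Taking rows \(k,k+1\) and columns \(0,1\) gives the minor
\[
\det\begin{pmatrix}c_k & c_{k-1}\\ c_{k+1} & c_k\end{pmatrix}=c_k^2-c_{k-1}c_{k+1}\ge0,
\]
which is exactly log-concavity of \((c_k)\).

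Finally, for the no-internal-zeros assertion: the supports of \((a_k)\) and \((b_k)\) are integer intervals \([\alpha,\alpha']\) and \([\beta,\beta']\). Since all terms are nonnegative, \(c_k>0\) exactly when some \(a_ib_{k-i}>0\), that is, when \(k\in[\alpha+\beta,\alpha'+\beta']\). This is an interval, so \((c_k)\) has no internal zeros.

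The main obstacle is the forward direction of the TP\(_2\) equivalence, specifically the step \(a_pa_q\ge a_ra_s\). That is the one place where the no-internal-zeros hypothesis is genuinely used: it makes the ratio chain well defined and monotone on the whole interval \([r,s]\). Without it, the example \((1,0,1)\) is log-concave but its square \((1,0,2,0,1)\) is not.
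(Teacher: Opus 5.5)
Your proof is correct. The paper gives no proof of this statement to compare against: it quotes the result from Hoggar and Keilson--Gerber, and later uses only its polynomial form (a product of polynomials with nonnegative log-concave coefficients and no internal zeros has the same property).

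What you supply is the classical total-positivity ($\mathrm{PF}_2$) proof:
\begin{enumerate}
\item Log-concavity with no internal zeros makes the Toeplitz matrix $\mathrm{TP}_2$. This is your ratio-chain comparison $a_pa_q\ge a_ra_s$ for $r<p,q<s$ with $p+q=r+s$.
\item Cauchy--Binet transfers $\mathrm{TP}_2$ to $AB=(c_{i-j})$. Reducing to finite matrices via finite supports is legitimate.
\item The consecutive minor $c_k^2-c_{k-1}c_{k+1}$ is therefore nonnegative.
\item The support of $(c_k)$ is a sum of two integer intervals, hence an interval.
\end{enumerate}
Citing keeps the paper short. Your argument makes this tool self-contained and shows exactly where the no-internal-zeros hypothesis enters, which matters because the paper applies the theorem repeatedly and needs that hypothesis reproduced at each stage.

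Three side remarks in your write-up need fixing.
\begin{enumerate}
\item Your closing example is wrong. $(1,0,1)$ is not log-concave, since $0^2<1\cdot 1$, so it does not show that the no-internal-zeros hypothesis is needed. A correct example is $(1,0,0,1)$, which is log-concave, convolved with $(1,1)$. The result $(1,1,0,1,1)$ fails log-concavity at the middle.
\item The hypothesis is used in two places, not one. Besides the $\mathrm{TP}_2$ step, it is what makes the supports intervals in your last step.
\item If one sequence is identically zero, its support is empty rather than an interval $[\alpha,\alpha']$. In that case $c\equiv 0$ and the claim is trivial, but the case should be set aside explicitly.
\end{enumerate}
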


This theorem is due to Hoggar and to Keilson--Gerber \cite{hoggar,keilson-gerber}.  Equivalently, if two polynomials have nonnegative log-concave coefficient sequences with no internal zeros, then their product has the same property.  We shall use this form repeatedly.

\begin{lemma}\label{lem:symm-lc-trap}
A positive symmetric log-concave sequence is trapezoidal.
\end{lemma}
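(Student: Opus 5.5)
The plan is to use the fact that a positive log-concave sequence is unimodal, and then use symmetry to locate the peak at the center. Let \((a_0,\ldots,a_d)\) be positive, symmetric (\(a_k=a_{d-k}\)) and log-concave. Positivity lets us form the ratios \(r_k=a_k/a_{k-1}\) for \(1\le k\le d\). Log-concavity \(a_k^2\ge a_{k-1}a_{k+1}\), divided by \(a_{k-1}a_k>0\), is exactly \(r_k\ge r_{k+1}\). So the ratio sequence \((r_k)\) is nonincreasing.

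Since \((r_k)\) is nonincreasing, the indices split into three consecutive blocks: those with \(r_k>1\), then those with \(r_k=1\), then those with \(r_k<1\), each block possibly empty. On the first block the sequence strictly increases, on the middle block it is constant, and on the last block it strictly decreases. Hence \((a_k)\) is nondecreasing up to some interval, constant on that interval, and nonincreasing afterwards, which is trapezoidal in the sense of the definition.

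Strictly speaking, symmetry is not needed for trapezoidality; it only tells us where the plateau lies. Indeed \(a_k=a_{d-k}\) gives \(r_{d+1-k}=1/r_k\), so the constant block is symmetric about \(d/2\). If the definition required the plateau to be centered, this relation would supply that as well.

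There is no real obstacle here; the only care needed is that positivity (rather than mere nonnegativity) is what makes the ratios well defined. This is why the lemma is stated for positive sequences, and it will later be applied to the normalized Alexander polynomial, whose coefficients are all positive.
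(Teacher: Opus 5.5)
Your proof is correct and follows the paper's approach: both pass to the ratios $r_k=a_k/a_{k-1}$ and use log-concavity to make them nonincreasing. The paper uses the symmetry relation $r_{d+1-k}=r_k^{-1}$ to show that the ratios are $\ge 1$ before the middle and $\le 1$ after it. You instead read unimodality (in fact strict increase, a plateau, then strict decrease) directly off the monotone ratios, and use symmetry only to center the plateau; you are right that symmetry is not needed for trapezoidality as the paper defines it.
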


\begin{proof}
Let \((a_0,\ldots,a_d)\) be positive, symmetric, and log-concave.  Put \(r_k=a_k/a_{k-1}\) for \(1\le k\le d\).  Log-concavity is equivalent to
\[
        r_1\ge r_2\ge\cdots\ge r_d.
\]
Symmetry gives \(r_{d+1-k}=r_k^{-1}\).  If \(k\le d+1-k\) and \(r_k<1\), then \(r_{d+1-k}=r_k^{-1}>1\), contradicting the monotonicity of the quotient sequence.  Hence the quotients up to the middle are at least \(1\), and by symmetry the quotients after the middle are at most \(1\).  Therefore the sequence increases up to the middle, may have a central plateau, and then decreases symmetrically.
\end{proof}

\subsection{Alexander polynomial normalization}

The one-variable Alexander polynomial of an oriented link is well-defined up to multiplication by a Laurent unit \(\pm t^m\).  We write \(f\dd g\) for equality up to such a unit.  We use the reduced Burau representation introduced by Burau \cite{burau}; for background and for the closure formula used below, see Birman \cite{birman}.  For the braid closure \(\widehat{\gamma}\) of a \(p\)-braid \(\gamma\), the reduced Burau formula is
\begin{equation}\label{eq:burau-formula-general}
        \Delta_{\widehat{\gamma}}(t)
        \dd
        \frac{1-t}{1-t^p}\det\left(I-\Bur(\gamma)\right).
\end{equation}
In this paper \(p=4\).  We do not divide by any additional \((t-1)\)-factor in the link case; the polynomial appearing in \eqref{eq:burau-formula-general} is the ordinary one-variable Alexander polynomial used in Fox's conjecture.

The standard diagram of \(\Th(4,q)\) is alternating.  By the Crowell--Murasugi alternating-sign theorem recalled in the introduction, the nonzero Alexander coefficients alternate in sign.  In the present family, the factorizations below show directly that, after the substitution \(t=-z\) and multiplication by a Laurent unit, \(\Delta_{\Th(4,q)}(t)\) becomes a polynomial in \(z\) with positive coefficients.  Those coefficients are therefore the absolute values of the Alexander coefficients, up to a shift of indexing and a global sign.

\section{The uniform Burau spectral factorization}\label{sec:burau}

Let
\[
        \beta=\sigma_1\sigma_2^{-1}\sigma_3\in B_4.
\]
We use the reduced Burau representation with
\[
\Bur(\sigma_1)=
\begin{pmatrix}
-t&1&0\\
0&1&0\\
0&0&1
\end{pmatrix},
\quad
\Bur(\sigma_2)=
\begin{pmatrix}
1&0&0\\
t&-t&1\\
0&0&1
\end{pmatrix},
\quad
\Bur(\sigma_3)=
\begin{pmatrix}
1&0&0\\
0&1&0\\
0&t&-t
\end{pmatrix}.
\]
Then
\[
        M(t):=\Bur(\beta)
        =\Bur(\sigma_1)\Bur(\sigma_2)^{-1}\Bur(\sigma_3)
\]
is
\begin{equation}\label{eq:burau-matrix}
M(t)=
\begin{pmatrix}
1-t&\dfrac{t-1}{t}&-1\\[4pt]
1&\dfrac{t-1}{t}&-1\\[4pt]
0&t&-t
\end{pmatrix}.
\end{equation}

\begin{lemma}\label{lem:det-I-xM}
For an auxiliary variable \(x\),
\begin{equation}\label{eq:det-I-xM}
        \det(I-xM(t))
        =\frac{(1+tx)\bigl(tx^2+(t-1)^2x+t\bigr)}{t}.
\end{equation}
\end{lemma}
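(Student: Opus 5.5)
The lemma concerns the explicit $3\times3$ matrix $M(t)$ in \eqref{eq:burau-matrix}, so the plan is a direct computation. We avoid a full cofactor expansion by using the identity
\[
\det(I-xM)=1-x\,\operatorname{tr}M+x^2 e_2(M)-x^3\det M,
\]
where $e_2(M)$ is the sum of the principal $2\times2$ minors of $M$. We then compare with the expansion of the right-hand side of \eqref{eq:det-I-xM}. Multiplying out,
\[
\frac{(1+tx)\bigl(tx^2+(t-1)^2x+t\bigr)}{t}
=1+\frac{(t-1)^2+t^2}{t}\,x+\bigl(1+(t-1)^2\bigr)x^2+t\,x^3 .
\]
So it suffices to verify three identities: $\operatorname{tr}M=-\bigl((t-1)^2+t^2\bigr)/t$, $e_2(M)=1+(t-1)^2$, and $\det M=-t$.

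\textbf{Trace.} We have $\operatorname{tr}M=(1-t)+\frac{t-1}{t}-t$. Putting this over the common denominator $t$ gives $\frac{t-t^2+t-1-t^2}{t}=\frac{-2t^2+2t-1}{t}$. The target is $-\bigl((t-1)^2+t^2\bigr)/t=\frac{-2t^2+2t-1}{t}$, so the two agree.

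\textbf{Determinant.} Multiplicativity gives
\[
\det M=\det\Bur(\sigma_1)\,\det\Bur(\sigma_2)^{-1}\det\Bur(\sigma_3)=(-t)(-t)^{-1}(-t)=-t,
\]
as required. This is also a useful sanity check on the displayed matrix itself.

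\textbf{Principal minors.} This is the step where slips are most likely, so I would compute each minor separately.
\begin{itemize}
\item Minor on rows and columns $\{1,2\}$: $(1-t)\frac{t-1}{t}-\frac{t-1}{t}=\frac{t-1}{t}(-t)=1-t$.
\item Minor on $\{1,3\}$: $(1-t)(-t)-0=t^2-t$.
\item Minor on $\{2,3\}$: $\frac{t-1}{t}(-t)-(-1)t=1-t+t=1$.
\end{itemize}
The sum is $1-t+t^2-t+1=t^2-2t+2$, which equals $1+(t-1)^2$.

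Since all three coefficients match, \eqref{eq:det-I-xM} follows. The only real risk is an arithmetic or sign error in the matrix entries of \eqref{eq:burau-matrix}. If anything fails to match, I would recompute $\Bur(\sigma_2)^{-1}$ and the product $M(t)$ before suspecting the formula.
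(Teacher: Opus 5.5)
Your proof is correct and is essentially the paper's argument: the paper simply expands the $3\times3$ determinant of $I-xM(t)$ directly, whereas you organize the same computation as $1-x\operatorname{tr}M+x^2e_2(M)-x^3\det M$ and match the three coefficients against the expanded right-hand side; all three of your checks are right. Your use of multiplicativity to get $\det M=-t$ also agrees with the displayed entries, since subtracting row 2 from row 1 of \eqref{eq:burau-matrix} gives the row $(-t,0,0)$ and hence $\det M=-t$ directly.
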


\begin{proof}
Using \eqref{eq:burau-matrix},
\[
I-xM(t)=
\begin{pmatrix}
1-x(1-t)&-x(t-1)/t&x\\
-x&1-x(t-1)/t&x\\
0&-tx&1+tx
\end{pmatrix}.
\]
Expanding this determinant gives
\[
        \det(I-xM(t))
        =\frac{(1+tx)(t^2x+tx^2-2tx+t+x)}{t}
        =\frac{(1+tx)\bigl(tx^2+(t-1)^2x+t\bigr)}{t}.
\]
\end{proof}

\begin{proposition}[Uniform spectral factorization]\label{prop:uniform-factorization}
For every \(q\ge1\),
\begin{equation}\label{eq:uniform-factorization}
        \Delta_{\Th(4,q)}(-z)
        \dd
        (1+z+\cdots+z^{q-1})
        \prod_{\substack{\omega^q=1\\ \omega\ne1}}
        \left(z^2+(2-\omega-\omega^{-1})z+1\right).
\end{equation}
\end{proposition}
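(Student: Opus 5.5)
We combine the closure formula \eqref{eq:burau-formula-general} (with \(p=4\)) and Lemma~\ref{lem:det-I-xM}, using the standard identity
\[
\det\bigl(I-M^q\bigr)=\prod_{\omega^q=1}\det\bigl(I-\omega M\bigr),
\]
which comes from factoring \(1-\lambda^q=\prod_{\omega^q=1}(1-\omega\lambda)\) over the eigenvalues \(\lambda\) of \(M(t)\). Setting \(x=\omega\) in \eqref{eq:det-I-xM}, each factor is
\[
\det(I-\omega M)=\frac{(1+t\omega)\bigl(t\omega^2+(t-1)^2\omega+t\bigr)}{t}.
\]
Dividing the quadratic by \(t\omega\) rewrites it as \(\omega\bigl(t^{-1}\cdot t\omega+(t-1)^2/t\cdot 1+\omega^{-1}\bigr)\), i.e.
\[
t\omega^2+(t-1)^2\omega+t=\omega\bigl(t+t^{-1}-2+\omega+\omega^{-1}\bigr)\,t.
\]
Substituting \(t=-z\) gives \(t+t^{-1}-2+\omega+\omega^{-1}=-(z+z^{-1}+2-\omega-\omega^{-1})\). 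Hence each quadratic factor equals \(z^2+(2-\omega-\omega^{-1})z+1\), up to a unit \(\pm z^{-1}\omega\). The product of all the \(\omega\)'s over \(\omega^q=1\) is \(\pm1\), and the powers of \(t\) are Laurent units, so every such unit is absorbed into \(\dd\).

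Next we handle the linear factors. We have
\[
\prod_{\omega^q=1}(1+t\omega)=1-(-t)^q=1-z^q.
\]
The \(\omega=1\) quadratic is \(t+(t-1)^2+t=1+t^2=1+z^2\) (from the expression above with \(2-\omega-\omega^{-1}=0\)), i.e. \(z^2+0\cdot z+1\).

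Assembling the pieces, the prefactor \((1-t)/(1-t^4)\) becomes \(1/\bigl((1+t)(1+t^2)\bigr)=1/\bigl((1-z)(1+z^2)\bigr)\). The \((1+z^2)\) cancels against the \(\omega=1\) quadratic, and
\[
\frac{1-z^q}{1-z}=1+z+\cdots+z^{q-1}.
\]
What remains is exactly the product over \(\omega\ne1\) in \eqref{eq:uniform-factorization}.

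The main point needing care is the legitimacy of \(\det(I-M^q)=\prod_\omega\det(I-\omega M)\) over the field \(\mathbb{Q}(t)\) adjoined roots of unity. This is a polynomial identity in \(M\) via the eigenvalue factorization, and it is routine. We also track the Laurent-unit bookkeeping, making sure the units are monomials in \(z\) times roots of unity whose total product is \(\pm1\). The other potential pitfall is the sign convention relating \(\widehat{\beta^q}\) to \(\Th(4,q)\) and the exact Burau normalization, but this only affects the result up to \(\dd\).
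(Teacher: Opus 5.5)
Your proposal is correct and follows the paper's proof essentially step for step: the closure formula, the identity $\det(I-M^q)=\prod_{\omega^q=1}\det(I-\omega M)$, Lemma~\ref{lem:det-I-xM} at $x=\omega$, the substitution $t=-z$ with the units $-\omega/z$ absorbed, $\prod_{\omega}(1+t\omega)=1-z^q$, and cancellation of the $\omega=1$ factor $1+z^2$ against $1-z^4=(1-z)(1+z)(1+z^2)$. The differences are cosmetic: you pass through the symmetric form $t\omega^2+(t-1)^2\omega+t=t\omega\,(t+t^{-1}-2+\omega+\omega^{-1})$ before substituting (your verbal description of that division step is muddled, but the displayed identity is right), and you simplify $(1-t)/(1-t^4)$ before rather than after setting $t=-z$.
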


\begin{proof}
By \eqref{eq:burau-formula-general},
\[
        \Delta_{\widehat{\beta^q}}(t)
        \dd
        \frac{1-t}{1-t^4}\det(I-M(t)^q).
\]
For any square matrix \(M\),
\[
        \det(I-M^q)=\prod_{\omega^q=1}\det(I-\omega M).
\]
Indeed, the scalar identity \(1-X^q=\prod_{\omega^q=1}(1-\omega X)\) gives the corresponding matrix identity because all factors are polynomials in \(M\); taking determinants gives the displayed formula.

By Lemma~\ref{lem:det-I-xM},
\[
        \det(I-\omega M(t))
        =\frac{(1+t\omega)\bigl(t\omega^2+(t-1)^2\omega+t\bigr)}{t}.
\]
Now set \(t=-z\).  Then
\[
\begin{aligned}
        t\omega^2+(t-1)^2\omega+t
        &=-z\omega^2+(1+z)^2\omega-z  \\
        &=\omega\left(z^2+(2-\omega-\omega^{-1})z+1\right).
\end{aligned}
\]
Therefore
\begin{equation}\label{eq:single-root-factor}
        \det(I-\omega M(-z))
        =-\frac{\omega}{z}(1-z\omega)
        \left(z^2+(2-\omega-\omega^{-1})z+1\right).
\end{equation}
The prefactor \(-\omega/z\), when multiplied over all \(q\)-th roots of unity, contributes only a Laurent unit in \(z\).  Hence
\[
\begin{aligned}
\Delta_{\widehat{\beta^q}}(-z)
&\dd
\frac{1+z}{1-z^4}
\prod_{\omega^q=1}(1-z\omega)
\prod_{\omega^q=1}
\left(z^2+(2-\omega-\omega^{-1})z+1\right)  \\
&=
\frac{1+z}{1-z^4}(1-z^q)(z^2+1)
\prod_{\substack{\omega^q=1\\ \omega\ne1}}
\left(z^2+(2-\omega-\omega^{-1})z+1\right).
\end{aligned}
\]
Since \(1-z^4=(1-z)(1+z)(1+z^2)\), this becomes
\[
        \frac{1-z^q}{1-z}
        \prod_{\substack{\omega^q=1\\ \omega\ne1}}
        \left(z^2+(2-\omega-\omega^{-1})z+1\right),
\]
which is \eqref{eq:uniform-factorization}.
\end{proof}

\begin{corollary}[Odd exponents]\label{cor:odd-factorization}
Let \(q=2n+1\) with \(n\ge0\).  Then
\begin{equation}\label{eq:odd-factorization}
        \Delta_{\Th(4,2n+1)}(-z)
        \dd
        A_{2n+1}(z):=(1+z+\cdots+z^{2n})D_n(z)^2,
\end{equation}
where \(D_0(z)=1\), and for \(n\ge1\),
\begin{equation}\label{eq:Dn-def}
        D_n(z)=\prod_{r=1}^n
        \left(z^2+4\sin^2\frac{\pi r}{2n+1}\,z+1\right).
\end{equation}
\end{corollary}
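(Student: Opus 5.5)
The plan is to specialize Proposition~\ref{prop:uniform-factorization} to \(q=2n+1\) and then group the nontrivial roots of unity into inverse pairs. For \(q=2n+1\), the geometric factor \(1+z+\cdots+z^{q-1}\) is already \(1+z+\cdots+z^{2n}\). So the only work lies in the product over \(\omega\ne1\) with \(\omega^{2n+1}=1\).

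\textbf{Case \(n=0\).} Here \(q=1\) and the product over \(\omega\ne1\) is empty. Hence \(\Delta_{\Th(4,1)}(-z)\dd 1=A_1(z)\), consistent with \(D_0=1\).

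\textbf{Case \(n\ge1\).} Write \(\omega=e^{2\pi i k/(2n+1)}\) with \(1\le k\le 2n\). Because \(2n+1\) is odd, the only root of unity fixed by \(\omega\mapsto\omega^{-1}\) is \(\omega=1\). The nontrivial roots therefore split into \(n\) inverse pairs \(\{\omega_r,\omega_r^{-1}\}\), where \(\omega_r=e^{2\pi i r/(2n+1)}\) and \(1\le r\le n\).

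The quadratic factor \(z^2+(2-\omega-\omega^{-1})z+1\) is invariant under \(\omega\mapsto\omega^{-1}\). So each pair contributes the same factor twice, and the product over \(\omega\ne1\) equals
\[
\prod_{r=1}^n\bigl(z^2+(2-\omega_r-\omega_r^{-1})z+1\bigr)^2 .
\]

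It remains to identify the middle coefficient, using the identity from the introduction with \(\theta=2\pi r/(2n+1)\):
\[
2-\omega_r-\omega_r^{-1}=2-2\cos\theta=4\sin^2(\theta/2)=4\sin^2\frac{\pi r}{2n+1}.
\]
This turns the product into \(D_n(z)^2\) with \(D_n\) exactly as in \eqref{eq:Dn-def}, which gives \eqref{eq:odd-factorization}.

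There is no real obstacle here. The only point needing care is the bijection between the nontrivial \((2n+1)\)-th roots of unity and the set \(\{\omega_r^{\pm1}: 1\le r\le n\}\). This holds because \(k\mapsto 2n+1-k\) is a fixed-point-free involution on \(\{1,\dots,2n\}\), and \(\{1,\dots,n\}\) is a set of representatives for its orbits.
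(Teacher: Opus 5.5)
Your proposal is correct and follows essentially the same route as the paper: specialize the uniform factorization to \(q=2n+1\), pair \(\omega_r\) with \(\omega_r^{-1}\) (indices \(r\) and \(2n+1-r\)), and identify \(2-\omega_r-\omega_r^{-1}=4\sin^2\frac{\pi r}{2n+1}\). Your separate treatment of \(n=0\) as an empty product, and your remark that \(k\mapsto 2n+1-k\) is a fixed-point-free involution on \(\{1,\dots,2n\}\), only spell out what the paper leaves implicit.
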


\begin{proof}
Let \(\omega_r=e^{2\pi i r/(2n+1)}\).  The nontrivial roots occur in reciprocal pairs \(r\) and \(2n+1-r\), for \(1\le r\le n\).  Moreover
\[
        2-\omega_r-\omega_r^{-1}
        =2-2\cos\frac{2\pi r}{2n+1}
        =4\sin^2\frac{\pi r}{2n+1}.
\]
Thus the two reciprocal roots give the same quadratic factor, yielding the square \(D_n(z)^2\).  The factor \((1-z^{2n+1})/(1-z)\) is \(1+z+\cdots+z^{2n}\).
\end{proof}

\begin{corollary}[Even exponents]\label{cor:even-factorization}
Let \(q=2n\) with \(n\ge1\).  Then
\begin{equation}\label{eq:even-factorization}
        \Delta_{\Th(4,2n)}(-z)
        \dd
        A_{2n}(z):=(1+z+\cdots+z^{2n-1})(z^2+4z+1)E_n(z)^2,
\end{equation}
where
\begin{equation}\label{eq:En-def}
        E_n(z)=\prod_{r=1}^{n-1}
        \left(z^2+4\sin^2\frac{\pi r}{2n}\,z+1\right),
\end{equation}
with the convention \(E_1(z)=1\).
\end{corollary}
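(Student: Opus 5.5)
The plan is to specialize Proposition~\ref{prop:uniform-factorization} to \(q=2n\), mirroring the proof of Corollary~\ref{cor:odd-factorization}.

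Let \(\omega_r=e^{2\pi i r/(2n)}=e^{\pi i r/n}\) for \(0\le r\le 2n-1\). First, the geometric factor \((1-z^{2n})/(1-z)\) equals \(1+z+\cdots+z^{2n-1}\), so it accounts for the first factor in \eqref{eq:even-factorization}.

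Next, split the nontrivial roots \(\omega_r\), \(1\le r\le 2n-1\), according to the involution \(r\mapsto 2n-r\), which is \(\omega\mapsto\omega^{-1}\). The only fixed point with \(r\ne0\) is \(r=n\), where \(\omega_n=-1\). For this root, \(2-\omega-\omega^{-1}=4\), so it contributes the single factor \(z^2+4z+1\). The remaining roots pair off as \(\{r,2n-r\}\) with \(1\le r\le n-1\). Since \(\omega_r+\omega_r^{-1}=2\cos(\pi r/n)\), the middle coefficient is
\[
2-2\cos\frac{\pi r}{n}=4\sin^2\frac{\pi r}{2n}.
\]
The two members of each pair therefore give identical quadratics, and the product over the pairs is \(E_n(z)^2\).

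When \(n=1\) there are no pairs (\(q=2\), the only nontrivial root is \(-1\)), which matches the convention \(E_1=1\). Combining the three pieces gives \eqref{eq:even-factorization} up to a Laurent unit.

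No step here is a real obstacle. The only points to check are the bookkeeping of the involution's fixed point \(\omega=-1\) and the half-angle identity.
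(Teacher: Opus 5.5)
Your proposal is correct and follows the paper's proof essentially step for step: you specialize Proposition~\ref{prop:uniform-factorization} to \(q=2n\), isolate the inversion-fixed root \(\omega=-1\) (contributing \(z^2+4z+1\)), and pair \(r\) with \(2n-r\) using \(2-2\cos(\pi r/n)=4\sin^2(\pi r/(2n))\) to get \(E_n(z)^2\), alongside the geometric factor \((1-z^{2n})/(1-z)\). Your explicit check of \(n=1\) is a small addition; the paper covers that case through the convention \(E_1=1\).
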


\begin{proof}
Let \(\omega_r=e^{2\pi i r/(2n)}\), \(1\le r\le 2n-1\).  Then
\[
        2-\omega_r-\omega_r^{-1}
        =2-2\cos\frac{\pi r}{n}
        =4\sin^2\frac{\pi r}{2n}.
\]
The roots \(r\) and \(2n-r\) give the same quadratic factor.  The unique nontrivial root fixed by inversion is \(r=n\), namely \(\omega=-1\), and it contributes
\[
        z^2+(2-(-1)-(-1))z+1=z^2+4z+1.
\]
Pairing all remaining roots gives \(E_n(z)^2\), and \((1-z^{2n})/(1-z)=1+z+\cdots+z^{2n-1}\).
\end{proof}

\section{The four-block smoothing theorem}\label{sec:fourblock}

The core algebraic input is a smoothing theorem for products of four reciprocal quartic pair-blocks.  The width four is essential for the argument: the extremal block \(P_{0,4}(z)=1+4z+2z^2+4z^3+z^4\) is not log-concave, and neither is its cube.  Indeed, the middle coefficients of \(P_{0,4}(z)^3\) contain the pattern \(312,308,312\), so the middle log-concavity margin is negative.  The proof below therefore embeds the natural semialgebraic region into a cube and verifies coordinatewise monotonicity of the log-concavity margins by an exact integer positivity certificate.

\subsection{A positivity certificate on a cube}

\begin{lemma}[Positive-orthant certificate]\label{lem:positive-orthant-certificate}
Let \(f\in\Z[x_1,\ldots,x_m]\).  For each \(j\), choose \(d_j\ge \deg_{x_j}f\), and define
\[
        \Phi(f)(u_1,\ldots,u_m)
        =\prod_{j=1}^m(1+u_j)^{d_j}
        f\left(\frac{4u_1}{1+u_1},\ldots,
                \frac{4u_m}{1+u_m}\right).
\]
If every coefficient of \(\Phi(f)\) is nonnegative, then \(f\ge0\) on the cube \([0,4]^m\).
\end{lemma}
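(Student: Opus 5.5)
The argument is a change of variables followed by a sign check on the positive orthant.

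Consider the map \(u\mapsto x=4u/(1+u)\). It sends \([0,\infty)\) bijectively and increasingly onto \([0,4)\), with inverse \(u=x/(4-x)\). Hence every point \(x\in[0,4)^m\) is the image of a unique point \(u\in[0,\infty)^m\). The remaining boundary points, where some \(x_j=4\), will be handled by continuity.

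Next I check that \(\Phi(f)\) is a genuine polynomial. Write \(f=\sum_\alpha c_\alpha x^\alpha\) with \(\alpha_j\le \deg_{x_j}f\le d_j\). Each monomial transforms as
\[
\prod_j(1+u_j)^{d_j}\prod_j\Bigl(\frac{4u_j}{1+u_j}\Bigr)^{\alpha_j}
=\prod_j 4^{\alpha_j}u_j^{\alpha_j}(1+u_j)^{d_j-\alpha_j}.
\]
Since \(d_j-\alpha_j\ge0\), this is a polynomial in \(u\) with integer coefficients, and so is \(\Phi(f)\).

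Now suppose all coefficients of \(\Phi(f)\) are nonnegative. Then \(\Phi(f)(u)\ge0\) for every \(u\in[0,\infty)^m\), because each monomial is nonnegative there. Take \(x\in[0,4)^m\) and let \(u_j=x_j/(4-x_j)\ge0\). Then
\[
f(x)=\prod_j(1+u_j)^{-d_j}\,\Phi(f)(u)\ge0,
\]
since the prefactor is positive.

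Finally, \([0,4)^m\) is dense in \([0,4]^m\) and \(f\) is continuous, so \(f\ge0\) on the closed cube. No step here is a serious obstacle. The only points needing care are the polynomiality of \(\Phi(f)\), which uses the hypothesis \(d_j\ge\deg_{x_j}f\), and the closure argument for the faces \(x_j=4\), which are not attained by finite \(u\).
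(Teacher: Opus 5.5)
Your proof is correct and follows the paper's argument essentially step for step. It shows \(\Phi(f)\ge0\) on the closed orthant, transfers this to \([0,4)^m\) via the bijection \(u_j\mapsto 4u_j/(1+u_j)\) using the strictly positive cleared denominator, and uses continuity to reach the faces \(x_j=4\); your explicit check that \(\Phi(f)\) is a polynomial, via \(d_j\ge\deg_{x_j}f\), is a detail the paper leaves implicit and a worthwhile addition.
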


\begin{proof}
For \(u_j\ge0\), all monomials in \(\Phi(f)\) are nonnegative, and therefore \(\Phi(f)(u)\ge0\).  The map
\[
        x_j=\frac{4u_j}{1+u_j}
\]
sends \([0,\infty)^m\) onto \([0,4)^m\).  The cleared denominator is strictly positive on \([0,\infty)^m\), so \(f\ge0\) on \([0,4)^m\).  Continuity extends the inequality to the closed cube.
\end{proof}

\subsection{Generalized four-block products}

For \(0\le \sigma,
\eta\le4\), define
\begin{equation}\label{eq:Q-sigma-eta}
        Q_{\sigma,\eta}(z)
        =1+(4+\sigma)z+(2+4\sigma+\eta)z^2
        +(4+\sigma)z^3+z^4.
\end{equation}
For four such blocks, write
\begin{equation}\label{eq:F-general}
        F(z)=\prod_{i=1}^4 Q_{\sigma_i,\eta_i}(z)
        =\sum_{k=0}^{16}C_k z^k.
\end{equation}
The coefficients are palindromic, \(C_k=C_{16-k}\).  For \(1\le k\le15\), define
\[
        L_k=C_k^2-C_{k-1}C_{k+1}.
\]
Then \(L_{16-k}=L_k\).

\begin{lemma}\label{lem:first-margin}
For every choice of parameters \(0\le\sigma_i,\eta_i\le4\), one has \(L_1\ge136\).  Consequently \(L_1>0\) and \(L_{15}>0\).
\end{lemma}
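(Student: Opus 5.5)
The bound can be checked by hand, with no need for the cube certificate of Lemma~\ref{lem:positive-orthant-certificate}. The plan is to write \(L_1=C_1^2-C_0C_2\) explicitly in terms of the block coefficients and see that it splits into a sum of manifestly bounded pieces.

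Put \(a_i=4+\sigma_i\) and \(b_i=2+4\sigma_i+\eta_i\), so that \(Q_{\sigma_i,\eta_i}(z)=1+a_iz+b_iz^2+a_iz^3+z^4\). Expanding the product \eqref{eq:F-general} to second order gives
\[
        C_0=1,\qquad C_1=\sum_{i=1}^4 a_i,\qquad
        C_2=\sum_{i=1}^4 b_i+\sum_{i<j}a_ia_j .
\]
Since \(\bigl(\sum a_i\bigr)^2=\sum a_i^2+2\sum_{i<j}a_ia_j\), this yields
\[
        L_1=C_1^2-C_2=\sum_{i=1}^4\bigl(a_i^2-b_i\bigr)+\sum_{i<j}a_ia_j .
\]

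Next I bound each piece separately on the parameter box \(0\le\sigma_i,\eta_i\le4\).

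\emph{Diagonal terms.} We have
\[
        a_i^2-b_i=16+8\sigma_i+\sigma_i^2-2-4\sigma_i-\eta_i=14+4\sigma_i+\sigma_i^2-\eta_i .
\]
This is at least \(14-4=10\), because \(\sigma_i\ge0\) and \(\eta_i\le4\).

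\emph{Cross terms.} Each \(a_i\ge4\), so each of the six products satisfies \(a_ia_j\ge16\).

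Summing, \(L_1\ge 4\cdot10+6\cdot16=136\). The bound is sharp: equality holds exactly when every \(\sigma_i=0\) and every \(\eta_i=4\), which is the extremal block configuration.

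Finally, \(L_1\ge136>0\) gives \(L_1>0\). The palindromic symmetry \(C_k=C_{16-k}\) gives \(L_{15}=L_1\), so \(L_{15}>0\) as well.

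There is no real obstacle here. The only step requiring care is the bookkeeping in \(C_2\): it must include both the single-block contributions \(b_i\) and the pairwise products \(a_ia_j\), and the cross terms must then be seen to enter \(L_1\) with coefficient \(+1\).
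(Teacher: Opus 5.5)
Your proof is correct and follows the paper's route: you use the same expansion $L_1=\sum_i(a_i^2-b_i)+\sum_{i<j}a_ia_j$, and your termwise bounds $a_i^2-b_i\ge10$ and $a_ia_j\ge16$ are a slightly cleaner replacement for the paper's estimate $b_i\le 4a_i-10$ followed by a monotonicity argument in the $a_i$. One side remark is mislabeled: your equality case $\sigma_i=0,\ \eta_i=4$ is the product $(1+z)^{16}$, not the paper's extremal block $P_{0,4}=Q_{0,0}$ (for which $L_1=152$), though this does not affect the argument.
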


\begin{proof}
Write
\[
        A_i=4+\sigma_i,
        \qquad
        B_i=2+4\sigma_i+\eta_i.
\]
Then
\[
        Q_{\sigma_i,\eta_i}(z)=1+A_i z+B_i z^2+A_i z^3+z^4.
\]
From the product expansion,
\[
        C_1=\sum_i A_i,
        \qquad
        C_2=\sum_i B_i+\sum_{i<j}A_iA_j.
\]
Thus
\[
\begin{aligned}
        L_1
        &=C_1^2-C_2  \\
        &=\sum_i A_i^2+\sum_{i<j}A_iA_j-\sum_iB_i.
\end{aligned}
\]
Since \(0\le\eta_i\le4\),
\[
        B_i=2+4\sigma_i+\eta_i\le6+4\sigma_i=4A_i-10.
\]
Therefore
\[
        L_1
        \ge \sum_i(A_i^2-4A_i+10)+\sum_{i<j}A_iA_j.
\]
For \(A_i\ge4\), the right-hand side is increasing in each \(A_i\), because its partial derivative with respect to \(A_i\) is
\[
        2A_i-4+\sum_{j\ne i}A_j\ge 8-4+3\cdot4>0.
\]
Hence its minimum occurs at \(A_1=A_2=A_3=A_4=4\), giving
\[
        L_1\ge 4(16-16+10)+6\cdot16=136.
\]
\end{proof}

\begin{proposition}[Derivative certificate]\label{prop:derivative-certificate}
For the product \eqref{eq:F-general}, the margins \(L_k\) satisfy
\[
        \frac{\partial L_k}{\partial \sigma_i}\ge0,
        \qquad
        \frac{\partial L_k}{\partial \eta_i}\ge0
\]
on the cube \([0,4]^8\), for every \(2\le k\le8\) and every \(1\le i\le4\).
\end{proposition}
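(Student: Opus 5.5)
The plan is to apply Lemma~\ref{lem:positive-orthant-certificate} directly to each derivative.

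First, reduce by symmetry. The product \(F\) is symmetric under permuting the four blocks, and the map \((\sigma_i,\eta_i)\mapsto(\sigma_j,\eta_j)\) preserves both \(L_k\) and the cube. Hence it suffices to treat \(i=1\). That leaves \(7\) values of \(k\) and two partial derivatives, so fourteen polynomials
\[
f_{k}^{\sigma}=\frac{\partial L_k}{\partial\sigma_1},\qquad f_k^{\eta}=\frac{\partial L_k}{\partial\eta_1},\qquad 2\le k\le 8 .
\]
Each \(C_k\) is a polynomial with integer coefficients in the eight parameters. Each block coefficient is affine in \((\sigma_i,\eta_i)\), and the \(k\)-th coefficient of a product of four blocks is multilinear in the blocks. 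Therefore \(C_k\) has degree at most \(2\) in each \(\sigma_i\) (through \(A_i\) and \(B_i\), when both appear in the same term, e.g.\ \(A_iB_i\) cannot occur since each block contributes one coefficient) — more precisely, \(C_k\) is affine in each block's coefficient vector and hence of degree at most \(1\) in each of \(\sigma_i,\eta_i\). Consequently \(L_k\) has degree at most \(2\) in each variable, and each derivative has integer coefficients with \(\deg_{x_j}\le 2\). I would take \(d_j=2\) for all \(j\), adjusting downward to \(d_j=1\) for the differentiated variable when this is legitimate.

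Second, I would compute \(\Phi(f)\) with the substitution \(\sigma_j=4u_j/(1+u_j)\), \(\eta_j=4v_j/(1+v_j)\). This is an exact expansion over \(\Z\) in \(8\) variables, with multidegree at most \(2\) in each variable, so at most \(3^8=6561\) monomials per polynomial. I would then check that every coefficient is nonnegative, and Lemma~\ref{lem:positive-orthant-certificate} yields the claimed inequalities on \([0,4]^8\). To make the computation and its verification more transparent, I would first expand \(L_k\) in terms of elementary symmetric functions of the block data. Writing \(F=Q_1\cdot G\), where \(G\) is the product of the other three blocks, each \(C_k\) is linear in \((1,A_1,B_1)\) with coefficients given by coefficients of \(G\). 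Then
\[
\partial_{\sigma_1}L_k = 2C_k\,\partial C_k - C_{k-1}\partial C_{k+1}-C_{k+1}\partial C_{k-1},
\qquad \partial C_k=g_{k-1}+4g_{k-2}+g_{k-3},
\]
where \(g_m\) are the coefficients of \(G\) (and similarly for \(\eta_1\), with \(\partial C_k=g_{k-2}\)). This reduces the check to polynomials in the \(g_m\), which are themselves positive combinations after the orthant substitution.

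The main obstacle is that coefficient positivity of \(\Phi(f)\) is only a sufficient condition and may fail even when \(f\ge0\), particularly near the vertex \(\sigma=\eta=0\), where log-concavity is tightest (cf.\ the \(P_{0,4}\) cube example). If some coefficient turns out negative, my first remedy is to raise the \(d_j\), which multiplies by extra powers of \((1+u_j)\) and often clears negative terms, as in P\'olya's theorem. Failing that, I would subdivide the cube into subcubes, applying the analogous affine-then-orthant map to each, and check positivity on each piece. The symmetric reduction and the degree bound keep this finite and exact.
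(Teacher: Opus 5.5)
Your plan is exactly the paper's proof. You reduce to $i=1$ by permuting blocks, form the fourteen derivatives $\partial L_k/\partial\sigma_1$ and $\partial L_k/\partial\eta_1$ for $2\le k\le 8$, substitute $\sigma_j=4u_j/(1+u_j)$ and $\eta_j=4v_j/(1+v_j)$, clear denominators, and check coefficient nonnegativity over $\Z$ via Lemma~\ref{lem:positive-orthant-certificate}. The paper reports that this direct certificate succeeds in all fourteen cases, with every nonzero coefficient positive and the smallest being $16$ for $\partial L_2/\partial\eta_1$, so your degree-raising and subdivision fallbacks are never needed.
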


\begin{proof}
By symmetry of the four blocks, it suffices to prove the inequalities for \(i=1\).  For each \(2\le k\le8\), compute the two derivative polynomials
\[
        \frac{\partial L_k}{\partial\sigma_1},
        \qquad
        \frac{\partial L_k}{\partial\eta_1}
\]
in the polynomial ring
\[
        \Z[\sigma_1,\ldots,\sigma_4,\eta_1,\ldots,\eta_4].
\]
The positivity of each derivative on \([0,4]^8\) is certified using Lemma~\ref{lem:positive-orthant-certificate}.  Explicitly, substitute
\[
        \sigma_i=\frac{4u_i}{1+u_i},
        \qquad
        \eta_i=\frac{4v_i}{1+v_i},
\]
clear the positive denominator, and expand the numerator as a polynomial in
\[
        u_1,\ldots,u_4,v_1,\ldots,v_4.
\]
In each of the fourteen cases, every nonzero coefficient is a positive integer.  The following table gives the smallest nonzero coefficient in each numerator polynomial:
\[
\begin{array}{c|cc}
 k
 & \min\operatorname{coeff}\left(\partial L_k/\partial\sigma_1\right)
 & \min\operatorname{coeff}\left(\partial L_k/\partial\eta_1\right) \\
\hline
2 & 1312    & 16     \\
3 & 25440   & 2356   \\
4 & 160416  & 27856  \\
5 & 472544  & 86668  \\
6 & 1159712 & 152208 \\
7 & 1867136 & 440838 \\
8 & 2087392 & 205296
\end{array}
\]
All computations are over the integers; no floating-point arithmetic is used.  The verification code is available at \codeurl.  By Lemma~\ref{lem:positive-orthant-certificate}, the derivative polynomials are nonnegative on \([0,4]^8\).
\end{proof}

\begin{theorem}[Generalized four-block smoothing]\label{thm:general-fourblock}
For every choice of parameters \(0\le\sigma_i,\eta_i\le4\), the coefficient sequence of
\[
        \prod_{i=1}^4Q_{\sigma_i,\eta_i}(z)
\]
is strictly log-concave.
\end{theorem}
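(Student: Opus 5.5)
The argument combines the derivative certificate with a single base-point evaluation. We need \(L_k>0\) for \(1\le k\le15\) and every parameter point in \([0,4]^8\). By palindromicity, \(L_{16-k}=L_k\), so it suffices to treat \(1\le k\le8\). Lemma~\ref{lem:first-margin} already gives \(L_1\ge136>0\), which disposes of \(k=1\) and \(k=15\).

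For \(2\le k\le8\), we use Proposition~\ref{prop:derivative-certificate}. Every partial derivative of \(L_k\) with respect to each \(\sigma_i\) and \(\eta_i\) is nonnegative on the cube. Fix a parameter point \(p\in[0,4]^8\) and join it to the origin by the segment \(s\mapsto sp\), \(s\in[0,1]\), which lies in the cube because the cube is convex and contains \(0\). Along this segment,
\[
        \frac{d}{ds}L_k(sp)=\nabla L_k(sp)\cdot p\ge0,
\]
since every coordinate of \(p\) is nonnegative and every partial derivative is nonnegative. Hence \(L_k(p)\ge L_k(0)\). So strict log-concavity everywhere reduces to \(L_k(0)>0\) for \(2\le k\le8\).

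At the origin every block equals
\[
        Q_{0,0}(z)=1+4z+2z^2+4z^3+z^4=P_{0,4}(z),
\]
so we need the coefficients of \(P_{0,4}(z)^4\) and must check that its middle margins \(C_k^2-C_{k-1}C_{k+1}\), \(2\le k\le8\), are strictly positive. This is a finite exact integer computation. It is exactly the step the introduction flags: the cube fails at the middle, but the fourth power should not. I would expand \((1+4z+2z^2+4z^3+z^4)^2=1+8z+20z^2+24z^3+38z^4+24z^5+20z^6+8z^7+z^8\), square once more to get \(C_0,\dots,C_8\), and verify the seven inequalities directly. The computation can also be cross-checked with the same verification code.

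The main obstacle is this base evaluation, specifically the central margin \(L_8=C_8^2-C_7C_9=C_8^2-C_7^2\). It requires \(C_8>C_7\) for the fourth power, whereas the cube has the dip \(312,308,312\). If some margin at the origin turned out to be nonpositive, the fallback would be to start the monotone path from a different base point with positive margins. That fails because the origin is the minimum point, so the claim genuinely rests on this computation. Apart from that, the argument is formal: monotonicity from the certificate, positivity at the minimum, and symmetry for the remaining \(k\).
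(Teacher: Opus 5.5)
Your proposal is correct and follows the paper's proof exactly. Lemma \(L_1\ge136\) disposes of \(k=1,15\), the derivative certificate gives \(L_k(p)\ge L_k(0)\) for \(2\le k\le8\), and palindromicity handles \(k\ge9\); your straight-segment chain-rule step is a valid substitute for the paper's appeal to coordinatewise monotonicity. The base computation you left pending does go through: \(Q_{0,0}(z)^4\) has \(C_0,\ldots,C_8=1,16,104,368,860,1616,2520,3120,3526\), so the margins \(L_2,\ldots,L_8\) are \(4928,45984,144912,444256,1308480,848880,2698276\), all positive, and in particular \(C_8=3526>C_7=3120\).
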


\begin{proof}
By Lemma~\ref{lem:first-margin}, \(L_1>0\) and \(L_{15}>0\).  For \(2\le k\le8\), Proposition~\ref{prop:derivative-certificate} shows that \(L_k\) is coordinatewise nondecreasing on \([0,4]^8\).  Hence
\[
        L_k(\sigma_1,\eta_1,\ldots,\sigma_4,\eta_4)
        \ge L_k(0,0,\ldots,0,0).
\]
At the origin,
\[
        Q_{0,0}(z)=1+4z+2z^2+4z^3+z^4.
\]
A direct expansion gives
\[
\begin{aligned}
        Q_{0,0}(z)^4
        ={}&1+16z+104z^2+368z^3+860z^4+1616z^5+2520z^6 \\
        &+3120z^7+3526z^8+3120z^9+2520z^{10}+1616z^{11} \\
        &+860z^{12}+368z^{13}+104z^{14}+16z^{15}+z^{16}.
\end{aligned}
\]
The log-concavity margins at the origin for \(2\le k\le8\) are
\[
\begin{array}{c|ccccccc}
 k&2&3&4&5&6&7&8\\
\hline
 L_k&4928&45984&144912&444256&1308480&848880&2698276.
\end{array}
\]
They are all positive.  Thus \(L_k>0\) for \(2\le k\le8\).  Palindromicity gives \(L_{16-k}=L_k\), so all internal margins are positive.
\end{proof}

\begin{corollary}[Four-block smoothing for paired quadratics]\label{cor:fourblock-paired}
Let \(0\le a_i,b_i\le4\) and \(a_i+b_i\ge4\) for \(1\le i\le4\).  Then the coefficient sequence of
\[
        \prod_{i=1}^4(1+a_i z+z^2)(1+b_i z+z^2)
\]
is strictly log-concave.
\end{corollary}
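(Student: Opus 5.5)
Each paired product $(1+a z+z^2)(1+b z+z^2)$ is itself one of the blocks $Q_{\sigma,\eta}$ for suitable parameters in $[0,4]^2$, so the corollary follows from Theorem~\ref{thm:general-fourblock}. Expanding,
\[
        (1+az+z^2)(1+bz+z^2)=1+(a+b)z+(2+ab)z^2+(a+b)z^3+z^4 .
\]
Matching this with \eqref{eq:Q-sigma-eta} forces
\[
        \sigma=a+b-4,
        \qquad
        \eta=2+ab-2-4\sigma=ab-4(a+b)+16=(4-a)(4-b).
\]
It remains to check that $(\sigma,\eta)\in[0,4]^2$ whenever $0\le a,b\le 4$ and $a+b\ge4$.

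For $\sigma$, the hypothesis $a+b\ge4$ gives $\sigma\ge0$, and $a,b\le4$ gives $a+b\le8$, hence $\sigma\le4$. For $\eta$, both factors $4-a$ and $4-b$ are nonnegative, so $\eta\ge0$. For the upper bound, put $s=4-a\ge0$ and $t=4-b\ge0$. The condition $a+b\ge4$ becomes $s+t\le4$, so AM--GM gives $\eta=st\le\bigl((s+t)/2\bigr)^2\le4$. Hence each factor $P_{a_i,b_i}(z)$ equals $Q_{\sigma_i,\eta_i}(z)$ with $(\sigma_i,\eta_i)\in[0,4]^2$.

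Theorem~\ref{thm:general-fourblock} then applies to the product of the four blocks and gives strict log-concavity directly. No step here is a real obstacle; all the analytic difficulty sits in Theorem~\ref{thm:general-fourblock}. The one point needing care is the bound $\eta\le4$: it is the only place where the constraint $a+b\ge4$ enters beyond $\sigma\ge0$, so it should be checked explicitly.
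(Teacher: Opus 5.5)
Your proposal is correct and matches the paper's proof: you use the same reparametrization $\sigma_i=a_i+b_i-4$, $\eta_i=(4-a_i)(4-b_i)$, the same AM--GM bound giving $\eta_i\le4$, and the same identity $a_ib_i=4\sigma_i+\eta_i$ to identify each paired product with $Q_{\sigma_i,\eta_i}(z)$. From there you invoke Theorem~\ref{thm:general-fourblock}, exactly as the paper does.
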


\begin{proof}
Set
\[
        \sigma_i=a_i+b_i-4,
        \qquad
        \eta_i=(4-a_i)(4-b_i).
\]
Since \(0\le a_i,b_i\le4\), both \(4-a_i\) and \(4-b_i\) are nonnegative.  Since \(a_i+b_i\ge4\),
\[
        (4-a_i)+(4-b_i)\le4.
\]
Therefore
\[
        0\le \eta_i=(4-a_i)(4-b_i)
        \le \left(\frac{(4-a_i)+(4-b_i)}2\right)^2
        \le4.
\]
Also \(0\le\sigma_i=a_i+b_i-4\le4\).  Moreover,
\[
        \eta_i=(4-a_i)(4-b_i)=16-4(a_i+b_i)+a_ib_i=a_ib_i-4\sigma_i,
\]
so \(a_ib_i=4\sigma_i+
\eta_i\).  Hence
\[
\begin{aligned}
(1+a_iz+z^2)(1+b_iz+z^2)
&=1+(4+\sigma_i)z+(2+4\sigma_i+\eta_i)z^2 \\
&\quad +(4+\sigma_i)z^3+z^4
=Q_{\sigma_i,\eta_i}(z).
\end{aligned}
\]
The result follows from Theorem~\ref{thm:general-fourblock}.
\end{proof}

\section{Odd exponents: the interior case}\label{sec:odd}

In this section we prove log-concavity of the core polynomial \(D_n(z)\) from \eqref{eq:Dn-def}.  The case \(n=0\) is trivial, so assume \(n\ge1\).  Write
\[
        D_n(z)=\prod_{r=1}^n(1+c_{r,n}z+z^2),
        \qquad
        c_{r,n}=4\sin^2\frac{\pi r}{2n+1}.
\]
This is the same polynomial as \eqref{eq:Dn-def}, with the order of each reciprocal factor reversed.

\subsection{Complementary pairs}

Let \(m=\lfloor n/2\rfloor\).  For \(1\le r\le m\), define the complementary pair-block
\begin{equation}\label{eq:odd-pair-block}
        P_{r,n}(z)=(1+c_{r,n}z+z^2)(1+c_{n+1-r,n}z+z^2).
\end{equation}
If \(n\) is odd, there is an additional middle factor
\[
        H_n(z)=1+c_{(n+1)/2,n}z+z^2.
\]
Thus
\begin{equation}\label{eq:Dn-paired}
        D_n(z)=
        \left(\prod_{r=1}^mP_{r,n}(z)\right)
        \times
        \begin{cases}
        1, & n\text{ even},\\
        H_n(z), & n\text{ odd}.
        \end{cases}
\end{equation}

\begin{lemma}\label{lem:odd-pair-sum}
For every \(1\le r\le m\),
\[
        c_{r,n}+c_{n+1-r,n}>4.
\]
\end{lemma}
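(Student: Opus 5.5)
We must show that \(4\sin^2\alpha+4\sin^2\beta>4\), where \(\alpha=\pi r/(2n+1)\) and \(\beta=\pi(n+1-r)/(2n+1)\). The plan is to use \(\sin^2x=(1-\cos 2x)/2\) to rewrite the sum as
\[
        c_{r,n}+c_{n+1-r,n}=4-2\cos 2\alpha-2\cos 2\beta .
\]
The claim is then equivalent to \(\cos2\alpha+\cos2\beta<0\).

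Next we apply the sum-to-product formula:
\[
        \cos2\alpha+\cos2\beta=2\cos(\alpha+\beta)\cos(\alpha-\beta).
\]
Here
\[
        \alpha+\beta=\frac{\pi(n+1)}{2n+1},
        \qquad
        \alpha-\beta=\frac{\pi(2r-n-1)}{2n+1}.
\]

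We locate both angles. Since \(2(n+1)>2n+1\), we have \(\alpha+\beta>\pi/2\). Since \(n+1<2n+1\) for \(n\ge1\), we also have \(\alpha+\beta<\pi\). Hence \(\cos(\alpha+\beta)<0\).

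For the difference, note that \(1\le r\le m\le n/2\). This gives \(|2r-n-1|=n+1-2r\le n-1<2n+1\). Moreover \(2(n+1-2r)<2n+1\) always holds, because \(2r\ge2>1/2\). So \(|\alpha-\beta|<\pi/2\), and therefore \(\cos(\alpha-\beta)>0\).

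Combining the two signs, the product \(2\cos(\alpha+\beta)\cos(\alpha-\beta)\) is strictly negative, which gives the strict inequality \(c_{r,n}+c_{n+1-r,n}>4\).

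There is no real obstacle here. The only care needed is the bookkeeping of the angle ranges: confirming the strict bound \(\alpha+\beta\in(\pi/2,\pi)\) and the bound \(|\alpha-\beta|<\pi/2\) using \(r\ge1\).
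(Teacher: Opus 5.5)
Your proof is correct and follows the paper's argument. Your double-angle and sum-to-product steps produce exactly the identity $\sin^2\alpha+\sin^2\beta=1-\cos(\alpha+\beta)\cos(\alpha-\beta)$, from which the paper concludes using $\cos(\alpha+\beta)<0$ and $\cos(\alpha-\beta)>0$, and your angle bookkeeping is accurate (you make explicit the bound $\alpha+\beta<\pi$, which the paper leaves implicit).
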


\begin{proof}
Set
\[
        \alpha=\frac{\pi r}{2n+1},
        \qquad
        \beta=\frac{\pi(n+1-r)}{2n+1}.
\]
Then
\[
        \alpha+\beta=\frac{\pi(n+1)}{2n+1}>\frac\pi2.
\]
Also \(0\le |\alpha-\beta|<\pi/2\).  Using
\[
        \sin^2\alpha+\sin^2\beta
        =1-\cos(\alpha+\beta)\cos(\alpha-\beta),
\]
we have \(\cos(\alpha+\beta)<0\) and \(\cos(\alpha-\beta)>0\).  Therefore
\[
        \sin^2\alpha+\sin^2\beta>1.
\]
Multiplying by \(4\) proves the claim.
\end{proof}

\begin{corollary}\label{cor:odd-four-pairs-lc}
The product of any four complementary pair-blocks \(P_{r,n}(z)\) has a strictly log-concave coefficient sequence.
\end{corollary}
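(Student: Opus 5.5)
The plan is to reduce the statement directly to Corollary~\ref{cor:fourblock-paired}. Any four complementary pair-blocks \(P_{r_1,n},\ldots,P_{r_4,n}\) (indices \(r_i\) in \(\{1,\ldots,m\}\), possibly repeated, since ``any four'' should include repetitions) give a product of the form
\[
        \prod_{i=1}^4(1+a_iz+z^2)(1+b_iz+z^2),
        \qquad a_i=c_{r_i,n},\quad b_i=c_{n+1-r_i,n}.
\]
So it suffices to check the hypotheses \(0\le a_i,b_i\le4\) and \(a_i+b_i\ge4\).

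The first hypothesis is immediate. Each \(c_{r,n}=4\sin^2\bigl(\pi r/(2n+1)\bigr)\) lies in \([0,4]\), since \(0\le\sin^2\le1\); in fact for \(1\le r\le n\) it lies in \((0,4)\). The second hypothesis is exactly Lemma~\ref{lem:odd-pair-sum}, which gives the strict inequality \(a_i+b_i>4\) for every \(1\le r_i\le m\). Note that \(m=\lfloor n/2\rfloor\) guarantees \(r_i<n+1-r_i\), so the two factors in each block are genuinely distinct factors of \(D_n\). This distinctness is not needed for the corollary, only for the later use in \eqref{eq:Dn-paired}.

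With both hypotheses verified, Corollary~\ref{cor:fourblock-paired} yields strict log-concavity of the product. Equivalently, one can set \(\sigma_i=a_i+b_i-4\in(0,4]\) and \(\eta_i=(4-a_i)(4-b_i)\in[0,4]\) and apply Theorem~\ref{thm:general-fourblock} directly.

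There is no real obstacle here. The only point needing care is to confirm that the corollary permits repeated or otherwise arbitrary parameters. It does, because it is a pointwise statement on the parameter region. The statement also only makes sense when \(m\ge1\), that is \(n\ge2\). When \(m<4\), repeated indices are required, and the statement is then an assertion about such products, which the corollary covers.
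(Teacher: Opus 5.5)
Your proposal is correct and is essentially the paper's own proof: it checks that \(c_{r,n}\in[0,4]\), invokes Lemma~\ref{lem:odd-pair-sum} for \(c_{r,n}+c_{n+1-r,n}>4\), and applies Corollary~\ref{cor:fourblock-paired}. Your remarks on repeated indices and on the pointwise nature of that corollary are accurate and spell out a point the paper leaves implicit.
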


\begin{proof}
Each coefficient \(c_{r,n}\) lies in \([0,4]\), and Lemma~\ref{lem:odd-pair-sum} gives \(c_{r,n}+c_{n+1-r,n}>4\).  The result follows from Corollary~\ref{cor:fourblock-paired}.
\end{proof}

\subsection{Leftover pair-blocks}

\begin{lemma}\label{lem:quartic-criterion}
Let
\[
        Q(z)=(1+az+z^2)(1+bz+z^2)
        =1+sz+tz^2+sz^3+z^4,
\]
where \(s=a+b\) and \(t=ab+2\).  If \(s\ge4\) and \(t\ge s\), then \(Q(z)\) has a log-concave coefficient sequence.
\end{lemma}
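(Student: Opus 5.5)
The coefficient sequence of \(Q(z)\) is \((1,s,t,s,1)\).  Log-concavity requires only \(s^2\ge 1\cdot t\) at indices \(1\) and \(3\), and \(t^2\ge s\cdot s\) at the middle index \(2\).  The plan is to check these two inequalities directly, using the two hypotheses together with the constraint that \(s=a+b\) and \(t=ab+2\) come from actual quadratic factors.

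The middle inequality \(t^2\ge s^2\) is immediate.  From \(t\ge s\ge4>0\) we get \(t^2\ge s^2\).  This is exactly where the hypothesis \(t\ge s\) is used.

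For the outer inequality \(s^2\ge t\), I will eliminate \(t\) using \(t=ab+2\) and AM--GM:
\[
ab\le \frac{(a+b)^2}{4}=\frac{s^2}{4}.
\]
This requires \(a,b\) real, which holds in the paper's setting (\(a,b\in[0,4]\)).  It gives
\[
t\le \frac{s^2}{4}+2,
\qquad
s^2-t\ge \frac{3s^2}{4}-2\ge 12-2>0
\]
when \(s\ge4\).  So the hypothesis \(s\ge4\) is what makes the outer margin positive, and the inequality is in fact strict.

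There is no real obstacle here.  The only subtle point is that \(s^2\ge t\) does not follow from \(s\ge4\) and \(t\ge s\) alone, since \(t\) could be arbitrarily large.  It genuinely needs the factorization through real \(a,b\), via \(ab\le (a+b)^2/4\).  Positivity of the coefficients (\(s\ge 4>0\) and \(t\ge s>0\)) then gives a positive log-concave sequence with no internal zeros.  This is the form needed to feed these leftover blocks into Theorem~\ref{thm:convolution}.
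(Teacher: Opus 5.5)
Your proof is correct and matches the paper's argument: you get the middle inequality \(t^2\ge s^2\) from \(t\ge s\ge0\), and the outer inequality \(s^2\ge t\) from \(ab\le s^2/4\) together with \(s\ge4\). You also note that the AM--GM step needs \(a,b\) to be real, which the paper leaves implicit.
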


\begin{proof}
The coefficient sequence is \((1,s,t,s,1)\).  Its log-concavity is equivalent to
\[
        s^2\ge t,
        \qquad
        t^2\ge s^2.
\]
The second inequality follows from \(t\ge s\ge0\).  For the first, use \(ab\le(a+b)^2/4=s^2/4\).  Since \(s\ge4\),
\[
        t=ab+2\le\frac{s^2}{4}+2\le s^2.
\]
Thus \(Q\) is log-concave.
\end{proof}

\begin{lemma}\label{lem:two-thirds}
If \(2/3\le a,b\le4\) and \(a+b\ge4\), then
\[
        (1+az+z^2)(1+bz+z^2)
\]
has a log-concave coefficient sequence.
\end{lemma}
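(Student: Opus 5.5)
We reduce Lemma~\ref{lem:two-thirds} directly to Lemma~\ref{lem:quartic-criterion}. With \(s=a+b\) and \(t=ab+2\), the hypothesis \(a+b\ge4\) gives \(s\ge4\) at once. So the only thing left to prove is \(t\ge s\), that is,
\[
        ab+2\ge a+b,
        \qquad\text{equivalently}\qquad
        (a-1)(b-1)\ge -1 .
\]
Once this holds, Lemma~\ref{lem:quartic-criterion} gives log-concavity of \((1,s,t,s,1)\).

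To prove \(ab-a-b+2\ge0\) on the region \(R=\{2/3\le a,b\le4,\ a+b\ge4\}\), note that the function \(g(a,b)=ab-a-b+2\) is bilinear. Fix the sum \(a+b=s\). Then \(ab\) is minimized when \(a\) and \(b\) are as far apart as \(R\) allows, namely at an endpoint where one variable equals \(2/3\) or \(4\).

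We then check the boundary pieces.

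\emph{The edge \(a=2/3\).} Here \(b\in[10/3,4]\), and
\[
g=\tfrac23 b-\tfrac23-b+2=\tfrac43-\tfrac b3 .
\]
At \(b=4\) this equals \(0\), and it is positive for smaller \(b\). So \(g\ge0\) on this edge, with equality exactly at \((2/3,4)\).

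\emph{The edge \(a=4\).} Here \(g=3b-2>0\) since \(b\ge2/3\).

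\emph{The edge \(a+b=4\).} Here \(g=ab-2\) with \(a\in[2/3,10/3]\). The minimum is at the endpoints, where \(ab=(2/3)(10/3)=20/9>2\).

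By symmetry in \(a\) and \(b\), these edges cover the whole boundary of \(R\). More simply: for fixed \(b\), \(g\) is affine in \(a\), so its minimum over the allowed \(a\)-interval is attained at an endpoint \(a=\max(2/3,4-b)\) or \(a=4\). This reduces the claim to the edge computations above.

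The main point to get right is the corner \((a,b)=(2/3,4)\), where \(t=s\) exactly. The constant \(2/3\) is sharp for the inequality \(t\ge s\) when the other variable is \(4\). The equality case is harmless, because Lemma~\ref{lem:quartic-criterion} only requires \(t\ge s\), giving \(t^2\ge s^2\) non-strictly. Apart from this, the argument is routine, and everything follows from Lemma~\ref{lem:quartic-criterion}.
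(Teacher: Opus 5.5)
Your proof is correct and follows the paper's route: you reduce via Lemma~\ref{lem:quartic-criterion} to $(a-1)(b-1)\ge -1$ and minimize this bilinear function. The paper minimizes over the whole square $[2/3,4]^2$ (minimum $-1$ at the corners $(2/3,4)$ and $(4,2/3)$), so your edge-by-edge analysis of the region cut out by $a+b\ge4$ is unnecessary but harmless; one small slip is that on the edge $a=4$ you get $g=3b-2$, which vanishes at $b=2/3$, so it should read $g\ge0$ (equality at the symmetric corner $(4,2/3)$) rather than $g>0$, which does not affect the conclusion.
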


\begin{proof}
Let \(s=a+b\) and \(t=ab+2\).  By Lemma~\ref{lem:quartic-criterion}, it remains only to prove \(t\ge s\).  We have
\[
        t-s=ab+2-a-b=(a-1)(b-1)+1.
\]
On the square \([2/3,4]^2\), the minimum of \((a-1)(b-1)\) is \(-1\), attained at \((a,b)=(2/3,4)\) or \((4,2/3)\).  Hence \(t-s\ge0\).
\end{proof}

\begin{lemma}\label{lem:odd-tail-pairs}
Let \(n\ge16\), let \(m=\lfloor n/2\rfloor\), and write
\[
        m=4k+s,
        \qquad s\in\{0,1,2,3\}.
\]
If \(s>0\), then every leftover pair-block \(P_{r,n}(z)\) with \(4k+1\le r\le m\) has a log-concave coefficient sequence.
\end{lemma}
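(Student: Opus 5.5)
The plan is to apply Lemma~\ref{lem:two-thirds} to each leftover pair-block.  By Lemma~\ref{lem:odd-pair-sum}, every complementary pair satisfies \(c_{r,n}+c_{n+1-r,n}>4\).  Each coefficient \(c_{r,n}=4\sin^2(\pi r/(2n+1))\) lies in \([0,4]\) because \(1\le r\le n\).  So the only hypothesis of Lemma~\ref{lem:two-thirds} left to check is the lower bound \(c_{r,n},c_{n+1-r,n}\ge 2/3\) for the leftover indices \(4k+1\le r\le m\).

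The two coefficients are handled separately.  For the partner index \(n+1-r\ge n+1-m\ge n/2\), the angle \(\pi(n+1-r)/(2n+1)\) lies in \([\pi/4,\pi/2]\).  Indeed \((n/2)/(2n+1)\ge 1/5\) gives an angle \(\ge\pi/5\), and the refinement \(n+1-r\ge (n+1)/2\) gives an angle \(\ge \pi/4\).  Hence \(c_{n+1-r,n}\ge 4\sin^2(\pi/4)=2\ge 2/3\).

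The other coefficient is the real issue, since \(c_{r,n}\) is increasing in \(r\) on \(1\le r\le n\) and \(r\) is smallest at \(r=4k+1\).  Because \(s\ge1\), we have \(4k=m-s\ge m-3\), so \(r\ge m-2\ge n/2-3\).  The angle then satisfies
\[
\frac{\pi r}{2n+1}\ge \frac{\pi(n/2-3)}{2n+1}=\frac{\pi(n-6)}{2(2n+1)}.
\]
For \(n\ge16\) this is at least \(\pi\cdot10/66=5\pi/33\), since \((n-6)/(2n+1)\) is increasing in \(n\).  We need \(4\sin^2(5\pi/33)\ge 2/3\), that is, \(\sin(5\pi/33)\ge 1/\sqrt6\approx0.408\).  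Using \(\sin x\ge x-x^3/6\) with \(x=5\pi/33\approx0.476\) gives about \(0.458\), which suffices.  This explicit trigonometric estimate is the main, if mild, obstacle.  It is the step that forces the threshold \(n\ge16\), and the bound \(r\ge m-2\) must be tracked carefully using the floor in \(m=\lfloor n/2\rfloor\).

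With \(c_{r,n},c_{n+1-r,n}\in[2/3,4]\) and their sum exceeding \(4\), Lemma~\ref{lem:two-thirds} applies, and each leftover \(P_{r,n}(z)\) has a log-concave coefficient sequence, as claimed.
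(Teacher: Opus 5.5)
Your proposal is correct and follows essentially the same route as the paper: you use $s\ge1$ to get $r\ge m-2$, bound the angle $\pi r/(2n+1)$ below by a fixed constant (the paper uses $\pi/7$, you use $5\pi/33$), apply $\sin x\ge x-x^3/6$ to get $c_{r,n}\ge 2/3$, and invoke Lemma~\ref{lem:two-thirds}; the paper treats the partner coefficient more quickly, via $c_{n+1-r,n}\ge c_{r,n}$ from monotonicity of $\sin^2$ on $[0,\pi/2]$. To make your numerical step rigorous, replace the decimal approximations by the bound $\pi>3$: for example, $5\pi/33>5/11$ and $\sin(5/11)\ge 5/11-(5/11)^3/6>0.43>1/\sqrt{6}$.
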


\begin{proof}
Since \(s\le3\), every such \(r\) satisfies \(r\ge m-2\).  For \(n\ge16\), this implies
\[
        r\ge \frac{2n+1}{7}.
\]
Indeed, if \(n\) is even, this follows from \(n/2-2\ge(2n+1)/7\), and if \(n\) is odd, from \((n-5)/2\ge(2n+1)/7\).
Thus
\[
        \frac{\pi r}{2n+1}\ge \frac\pi7.
\]
Since \(\pi>3\), \(\sin x\) is increasing on \([0,\pi/2]\), and \(\sin x\ge x-x^3/6\) for \(0\le x\le1\),
\[
        \sin\frac\pi7>
        \sin\frac37
        \ge \frac37-\frac16\left(\frac37\right)^3
        =\frac{285}{686}.
\]
Consequently
\[
        4\sin^2\frac\pi7>
        4\left(\frac{285}{686}\right)^2>
        \frac23.
\]
Therefore \(c_{r,n}\ge2/3\).  Since \(r\le n+1-r\le n\), monotonicity of \(x\mapsto\sin^2 x\) on \([0,\pi/2]\) gives \(c_{n+1-r,n}\ge c_{r,n}\ge2/3\).  Lemma~\ref{lem:two-thirds} now applies.
\end{proof}

\subsection{Log-concavity of \texorpdfstring{\(D_n\)}{Dn}}

For \(1\le n\le15\), exact computation from the recurrence in Appendix~\ref{app:cheb} gives the following minimum log-concavity margins.  If
\[
        D_n(z)=\sum_{k=0}^{2n}d_{n,k}z^k,
        \qquad
        \delta_{n,k}=d_{n,k}^2-d_{n,k-1}d_{n,k+1},
\]
then
\[
\begin{array}{c|ccccccccccccccc}
 n&1&2&3&4&5&6&7&8&9&10&11&12&13&14&15\\
\hline
 \min_k\delta_{n,k}
 &8&18&32&50&72&98&128&162&200&242&288&338&392&450&512.
\end{array}
\]
All entries are positive.

\begin{proposition}\label{prop:Dn-log-concave}
For every \(n\ge0\), the coefficient sequence of \(D_n(z)\) is log-concave and has no internal zeros.
\end{proposition}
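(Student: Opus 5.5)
The plan is to split into small and large \(n\) and, for large \(n\), write \(D_n\) as a product of pieces that are each log-concave with no internal zeros. Theorem~\ref{thm:convolution} then gives the result. For \(n=0\), \(D_0=1\) is trivial. For \(1\le n\le15\), we use the exact table above: every margin \(\delta_{n,k}\) is positive. Since \(D_n\) has all coefficients positive (each factor \(1+c_{r,n}z+z^2\) has positive constant and leading terms and a nonnegative middle coefficient), there are no internal zeros. So the small cases are settled directly.

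For \(n\ge16\), let \(m=\lfloor n/2\rfloor\) and write \(m=4k+s\) with \(s\in\{0,1,2,3\}\). Since \(n\ge16\), we have \(m\ge8\) and hence \(k\ge2\). Using the decomposition \eqref{eq:Dn-paired}, we group the pair-blocks \(P_{1,n},\dots,P_{4k,n}\) into \(k\) consecutive groups of four.
\begin{itemize}
\item By Corollary~\ref{cor:odd-four-pairs-lc}, each group product is strictly log-concave.
\item Each group product has all coefficients positive, being a product of polynomials with positive coefficients in every degree; in particular there are no internal zeros.
\item The leftover blocks \(P_{r,n}\) with \(4k+1\le r\le m\) (present only if \(s>0\)) are log-concave by Lemma~\ref{lem:odd-tail-pairs}. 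They have positive coefficients \((1,s,t,s,1)\), so again no internal zeros.
\end{itemize}

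If \(n\) is odd, there remains the middle factor \(H_n(z)=1+cz+z^2\) with \(c=c_{(n+1)/2,n}=4\sin^2\frac{\pi(n+1)/2}{2n+1}\). Its angle is close to \(\pi/4\) from above, so \(c\ge2\). The sequence \((1,c,1)\) is log-concave exactly when \(c^2\ge1\), which holds. To check \(c\ge2\) rigorously, note that \(\frac{\pi(n+1)}{2(2n+1)}>\frac\pi4\), so \(\sin^2>1/2\) and \(c>2\).

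Multiplying all pieces and applying Theorem~\ref{thm:convolution} repeatedly shows that \(D_n\) is log-concave with no internal zeros.

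The main obstacle is the bookkeeping at the boundaries. We must check that Lemma~\ref{lem:odd-tail-pairs} covers exactly the leftover indices, which holds since its hypothesis \(n\ge16\) matches the table cutoff. We must also confirm that the middle factor \(H_n\) is not itself one of the \(P_{r,n}\). The latter is immediate, because \(r\le m<(n+1)/2\) for \(n\) odd. Everything else is a direct application of earlier results.
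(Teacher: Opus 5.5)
Your proof is correct and follows the paper's argument essentially step for step. Both use the exact table for $1\le n\le15$. For $n\ge16$, both group the complementary pair-blocks into fours via Corollary~\ref{cor:odd-four-pairs-lc}, handle the leftovers with Lemma~\ref{lem:odd-tail-pairs}, treat the middle factor $H_n$ via $c>2$, and conclude with Theorem~\ref{thm:convolution}.

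One small wording fix: to get positivity of every coefficient of $D_n$ you need $c_{r,n}>0$, which holds since $1\le r\le n$. Nonnegativity alone is not enough, since $(1+z^2)^n$ has internal zeros. In any case, the strictly positive margins in the table already rule out internal zeros.
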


\begin{proof}
The case \(n=0\) is trivial, and the cases \(1\le n\le15\) follow from the exact table above.

Assume \(n\ge16\).  Let \(m=\lfloor n/2\rfloor\) and write \(m=4k+s\), where \(s\in\{0,1,2,3\}\).  Use the paired factorization \eqref{eq:Dn-paired}.  Group the first \(4k\) pair-blocks into products of four:
\[
        \prod_{r=1}^{4k}P_{r,n}(z)
        =\prod_{j=0}^{k-1}
        \left(\prod_{r=4j+1}^{4j+4}P_{r,n}(z)\right).
\]
By Corollary~\ref{cor:odd-four-pairs-lc}, each inner product of four pair-blocks is strictly log-concave.  If \(s>0\), the remaining pair-blocks \(P_{4k+1,n},\ldots,P_{m,n}\) are log-concave by Lemma~\ref{lem:odd-tail-pairs}.

If \(n\) is odd, the middle factor \(H_n(z)\) has coefficient sequence
\[
        \left(1,c_{(n+1)/2,n},1\right),
\]
and is log-concave because
\[
        c_{(n+1)/2,n}=4\sin^2\frac{\pi(n+1)}{2(2n+1)}>2>1.
\]
All factors have positive coefficient sequences.  Repeated application of Theorem~\ref{thm:convolution} proves that their product \(D_n(z)\) is log-concave and has no internal zeros.
\end{proof}

\section{Even exponents: the boundary case}\label{sec:even}

We now prove log-concavity of the core polynomial \(E_n(z)\) from \eqref{eq:En-def}.  Write
\[
        E_n(z)=\prod_{r=1}^{n-1}(1+a_{r,n}z+z^2),
        \qquad
        a_{r,n}=4\sin^2\frac{\pi r}{2n}.
\]
For \(1\le r\le \lfloor(n-1)/2\rfloor\), pair the \(r\)-th factor with the \((n-r)\)-th factor.  Since
\[
        a_{n-r,n}=4\cos^2\frac{\pi r}{2n},
\]
we have the boundary identity
\begin{equation}\label{eq:even-pair-sum}
        a_{r,n}+a_{n-r,n}=4.
\end{equation}
Define
\[
        R_{r,n}(z)=(1+a_{r,n}z+z^2)(1+a_{n-r,n}z+z^2).
\]
Then
\begin{equation}\label{eq:Rrn-form}
        R_{r,n}(z)=1+4z+e_{r,n}z^2+4z^3+z^4,
        \qquad
        e_{r,n}=2+a_{r,n}(4-a_{r,n})=2+4\sin^2\frac{\pi r}{n}.
\end{equation}
If \(n\) is even, the unpaired middle factor is
\[
        1+a_{n/2,n}z+z^2=(1+z)^2.
\]
Thus
\begin{equation}\label{eq:En-paired}
        E_n(z)=
        \left(\prod_{r=1}^{\lfloor(n-1)/2\rfloor}R_{r,n}(z)\right)
        \times
        \begin{cases}
        1, & n\text{ odd},\\
        (1+z)^2, & n\text{ even}.
        \end{cases}
\end{equation}

\begin{lemma}\label{lem:even-pair-LC}
The factor \(R_{r,n}(z)\) has a log-concave coefficient sequence whenever \(r\ge n/4\).
\end{lemma}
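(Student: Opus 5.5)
The coefficient sequence of \(R_{r,n}(z)\) is \((1,4,e_{r,n},4,1)\) by \eqref{eq:Rrn-form}. By symmetry, log-concavity reduces to two inequalities. The first, \(16\ge e_{r,n}\), is the condition at indices \(1\) and \(3\). The second, \(e_{r,n}^2\ge16\), i.e. \(e_{r,n}\ge4\) since \(e_{r,n}>0\), is the condition at index \(2\). So the plan is to check these two bounds directly from the closed form \(e_{r,n}=2+4\sin^2(\pi r/n)\). No appeal to the four-block machinery is needed.

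The upper bound is immediate and needs no hypothesis on \(r\): \(\sin^2\le1\) gives \(e_{r,n}\le6\le16\). Alternatively, one can cite Lemma~\ref{lem:quartic-criterion} with \(s=4\). Since \(a_{r,n}+a_{n-r,n}=4\) by \eqref{eq:even-pair-sum}, that lemma reduces everything to \(t=e_{r,n}\ge s=4\), the same inequality as before.

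The lower bound \(e_{r,n}\ge4\) is equivalent to \(\sin^2(\pi r/n)\ge\tfrac12\). Here we use the range of \(r\): we are in \(1\le r\le\lfloor(n-1)/2\rfloor\), so \(r<n/2\). Together with the hypothesis \(r\ge n/4\), this gives \(\pi r/n\in[\pi/4,\pi/2)\). On this interval \(\sin\) is increasing and \(\sin(\pi/4)=1/\sqrt2\), so \(\sin^2(\pi r/n)\ge\tfrac12\) and hence \(e_{r,n}\ge4\). It is worth recording equality at \(r=n/4\): there \(R_{r,n}=(1+2z+z^2)^2=(1+z)^4\), which is log-concave but not strictly so. So the statement is sharp and cannot be upgraded to strict log-concavity at the endpoint.

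The only point needing care is this lower bound, specifically remembering that \(r\) is restricted to the paired range \(r<n/2\), so the angle stays below \(\pi/2\). Everything else is a one-line computation.
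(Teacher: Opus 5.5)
Your argument is correct and is the same as the paper's: reduce log-concavity of $(1,4,e_{r,n},4,1)$ to $4\le e_{r,n}\le 16$, get the upper bound from $e_{r,n}\le 6$, and get the lower bound from $\pi r/n\in[\pi/4,\pi/2]$.

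One correction to your closing remark. At $r=n/4$ one has $a_{r,n}=2-\sqrt2$ and $a_{n-r,n}=2+\sqrt2$, so $R_{r,n}=1+4z+4z^2+4z^3+z^4$. Its middle margin is exactly $0$, so your claim that the lemma cannot be made strict at the endpoint still holds. The factor is not $(1+z)^4$, whose sequence $(1,4,6,4,1)$ is in fact strictly log-concave.
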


\begin{proof}
By \eqref{eq:Rrn-form}, the coefficient sequence is \((1,4,e_{r,n},4,1)\), where
\[
        e_{r,n}=2+4\sin^2\frac{\pi r}{n}.
\]
The outer log-concavity inequalities are \(4^2\ge e_{r,n}\), which hold because \(2\le e_{r,n}\le6\).  The middle inequality is \(e_{r,n}^2\ge16\), equivalently \(e_{r,n}\ge4\).  If \(r\ge n/4\) and \(1\le r\le n/2\), then
\[
        \sin^2\frac{\pi r}{n}\ge\frac12,
\]
so \(e_{r,n}\ge4\).  Hence \(R_{r,n}\) is log-concave.
\end{proof}

\begin{lemma}\label{lem:En-large}
For every \(n\ge10\), the coefficient sequence of \(E_n(z)\) is log-concave and has no internal zeros.
\end{lemma}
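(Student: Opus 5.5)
We mirror the proof of Proposition~\ref{prop:Dn-log-concave}, using the paired factorization \eqref{eq:En-paired}. Put \(p=\lfloor (n-1)/2\rfloor\), the number of pair-blocks \(R_{r,n}\), and write \(p=4k+s\) with \(s\in\{0,1,2,3\}\). We group the blocks \(R_{1,n},\ldots,R_{4k,n}\) into \(k\) consecutive quadruples. The leftover blocks \(R_{4k+1,n},\ldots,R_{p,n}\) and the middle factor \((1+z)^2\) (present when \(n\) is even) are then handled separately. Every factor has positive coefficients, so Theorem~\ref{thm:convolution} finishes once each factor is log-concave.

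\emph{Quadruples.} Each \(R_{r,n}\) is a product \((1+az+z^2)(1+bz+z^2)\) with \(a=a_{r,n}\) and \(b=a_{n-r,n}\). Both lie in \([0,4]\), and \(a+b=4\) by \eqref{eq:even-pair-sum}. Corollary~\ref{cor:fourblock-paired} therefore applies to any four such blocks, and each grouped product is strictly log-concave. (In the notation of the four-block theorem this is the boundary case \(\sigma_i=0\).)

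\emph{Leftover blocks.} Each leftover block satisfies \(r\ge p-2\). We need \(r\ge n/4\) in order to invoke Lemma~\ref{lem:even-pair-LC}, so it suffices to check \(p-2\ge n/4\). If \(n\) is odd, then \(p=(n-1)/2\), and \((n-1)/2-2\ge n/4\) iff \(n\ge10\); hence \(n\ge11\) suffices. If \(n\) is even, then \(p=(n-2)/2\), and \((n-2)/2-2\ge n/4\) iff \(n\ge12\).

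This leaves \(n=10\). There \(p=4\), so \(s=0\) and there are no leftover blocks at all. So the threshold \(n\ge10\) holds in all cases, though the even case needs this small separate check. For the same reason, we need not worry about \(s=0\) cases for larger \(n\). One could also note that \(r\le p\le n/2\) is automatic, as required in Lemma~\ref{lem:even-pair-LC}.

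\emph{Middle factor.} For even \(n\), the factor \((1+z)^2\) has coefficients \((1,2,1)\), which are log-concave with no internal zeros.

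The main obstacle is the bookkeeping in the leftover step: verifying \(r\ge n/4\) for every leftover index uniformly, including the parity split and the borderline \(n=10\). The substantive analytic input is already contained in Corollary~\ref{cor:fourblock-paired}. The statement's threshold \(n\ge10\) is presumably chosen so that this inequality holds, with smaller \(n\) left to a finite exact table, as in the odd case.
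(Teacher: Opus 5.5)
Your proposal is correct and follows the paper's proof essentially step for step. You group the blocks $R_{r,n}$ into quadruples handled by Corollary~\ref{cor:fourblock-paired} (boundary case $a+b=4$), treat the at most three leftover blocks with Lemma~\ref{lem:even-pair-LC} via $\lfloor (n-1)/2\rfloor-2\ge n/4$ for $n\ge 11$, observe that $n=10$ has no leftovers, include the middle factor $(1+z)^2$, and conclude with Theorem~\ref{thm:convolution}; your explicit parity split ($n\ge10$ odd, $n\ge12$ even) simply spells out the paper's one-line inequality check.
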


\begin{proof}
Let
\[
        N=\left\lfloor\frac{n-1}{2}\right\rfloor.
\]
Group \(R_{1,n},\ldots,R_{N,n}\) into products of four pair-blocks, leaving the final \(s\) blocks, where \(s\equiv N\pmod4\) and \(0\le s\le3\).  Each four-block product is strictly log-concave by Corollary~\ref{cor:fourblock-paired}, because each pair-block has the form
\[
        (1+az+z^2)(1+bz+z^2),
        \qquad
        0\le a,b\le4,
        \qquad a+b=4.
\]
If there are leftover factors, they are among
\[
        R_{N-2,n},\quad R_{N-1,n},\quad R_{N,n}.
\]
For \(n\ge11\),
\[
        N-2=\left\lfloor\frac{n-1}{2}\right\rfloor-2\ge\frac n4,
\]
so all leftover factors are log-concave by Lemma~\ref{lem:even-pair-LC}.  For \(n=10\), one has \(N=4\), so there are no leftover factors.  The possible middle factor \((1+z)^2\) is also log-concave.  Repeated application of Theorem~\ref{thm:convolution} proves the claim.
\end{proof}

For \(1\le n\le9\), exact computation from the Chebyshev form in Appendix~\ref{app:cheb} gives the following table.  If
\[
        E_n(z)=\sum_k e_{n,k}z^k,
        \qquad
        \epsilon_{n,k}=e_{n,k}^2-e_{n,k-1}e_{n,k+1},
\]
then
\[
\begin{array}{c|ccccccccc}
 n&1&2&3&4&5&6&7&8&9\\
\hline
 \min_k\epsilon_{n,k}
 &-&3&9&23&39&59&83&111&143.
\end{array}
\]
For \(n=1\), \(E_1(z)=1\), so there is no internal margin to check.

\begin{proposition}\label{prop:En-log-concave}
For every \(n\ge1\), the coefficient sequence of \(E_n(z)\) is log-concave and has no internal zeros.
\end{proposition}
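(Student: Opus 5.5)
The proof is a case split at \(n=10\), combining Lemma~\ref{lem:En-large} with the finite table of margins recorded just above the statement.

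For \(n\ge10\), the claim is exactly Lemma~\ref{lem:En-large}, so nothing further is needed there. For \(n=1\), \(E_1(z)=1\) is the constant sequence \((1)\). It is trivially log-concave and has no internal zeros.

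For \(2\le n\le9\), I would use the displayed table. Each \(E_n(z)\) is a product of quadratics \(1+a_{r,n}z+z^2\) with \(a_{r,n}>0\), so all its coefficients are strictly positive, and there are no internal zeros. The table gives \(\min_k\epsilon_{n,k}>0\) for each \(n\), which is strict log-concavity, hence in particular log-concavity.

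The only step that needs care is checking that these finite computations are exact rather than numerical. The coefficients \(a_{r,n}=4\sin^2(\pi r/2n)\) are algebraic, not integers. However, \(E_n(z)\) itself has integer coefficients. The reason is that \(E_n(z)^2(z^2+4z+1)(1+z+\cdots+z^{2n-1})\) equals, up to a unit, an integer polynomial. Alternatively, one can see it from the Chebyshev form in Appendix~\ref{app:cheb}, which expresses \(E_n\) through an integer recurrence.

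So the plan is to compute \(E_n\) for \(n\le9\) from that integer recurrence, for example via the identity
\[
\prod_{r=1}^{n-1}(x-2\cos(\pi r/n))=U_{n-1}(x/2),
\]
with \(x=-(z+z^{-1}-2)\) after clearing \(z^{n-1}\). One then reads off the integer margins and confirms that each is positive.

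I expect no genuine obstacle beyond this bookkeeping. The substantive work was done in Lemma~\ref{lem:En-large}, through the four-block smoothing theorem together with Hoggar--Keilson--Gerber.
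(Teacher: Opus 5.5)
Your proposal is correct and takes the same route as the paper: \(n\ge10\) is Lemma~\ref{lem:En-large}, and \(1\le n\le9\) comes from the exact table, with your remarks on positivity of coefficients and integrality supplying details the paper leaves implicit. One harmless slip: in the \(z\)-variable the Chebyshev substitution should be \(x=z+z^{-1}+2=(1+z)^2/z\), as in \eqref{eq:En-cheb}; your \(x=2-z-z^{-1}\) yields \(\pm E_n(-z)\) instead, but this has the same margins \(e_k^2-e_{k-1}e_{k+1}\), so nothing breaks.
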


\begin{proof}
The cases \(1\le n\le9\) follow from the exact table above, and the cases \(n\ge10\) follow from Lemma~\ref{lem:En-large}.
\end{proof}

\section{Completion of the proof}\label{sec:completion}

\begin{theorem}[Odd exponent theorem]\label{thm:odd-main}
For every \(n\ge0\), the polynomial \(A_{2n+1}(z)\) from \eqref{eq:odd-factorization} has a positive reciprocal log-concave coefficient sequence.  Consequently, Fox's trapezoidal conjecture holds for \(\Th(4,2n+1)\).
\end{theorem}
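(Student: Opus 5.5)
The plan is to assemble the already-proved pieces. By Corollary~\ref{cor:odd-factorization}, \(\Delta_{\Th(4,2n+1)}(-z)\dd A_{2n+1}(z)=(1+z+\cdots+z^{2n})D_n(z)^2\). So it suffices to show three things: \(A_{2n+1}\) has positive coefficients, it is reciprocal, and it is log-concave with no internal zeros. Trapezoidality of the absolute Alexander coefficients then follows from Lemma~\ref{lem:symm-lc-trap}.

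\emph{Reciprocity and positivity.} Each quadratic \(z^2+cz+1\) is reciprocal, and so is \(1+z+\cdots+z^{2n}\). A product of reciprocal polynomials is reciprocal, because if \(z^{\deg f}f(1/z)=f(z)\) and likewise for \(g\), then the same holds for \(fg\). Hence \(A_{2n+1}\) is reciprocal of degree \(2n+4n=6n\). Every factor has nonnegative coefficients, since \(c_{r,n}=4\sin^2(\pi r/(2n+1))\ge0\). The constant and leading coefficients are \(1\). Moreover the geometric factor has all coefficients positive in degrees \(0,\ldots,2n\), so after convolution with \(D_n^2\) every coefficient in degrees \(0,\ldots,6n\) is positive. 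This gives a positive sequence, which is exactly what Lemma~\ref{lem:symm-lc-trap} requires.

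\emph{Log-concavity.} The sequence \((1,1,\ldots,1)\) of the geometric factor is log-concave with no internal zeros. By Proposition~\ref{prop:Dn-log-concave}, \(D_n\) is log-concave with no internal zeros. Applying Theorem~\ref{thm:convolution} twice, first to \(D_n\cdot D_n\) and then to the product with the geometric factor, shows that \(A_{2n+1}\) is log-concave with no internal zeros. The case \(n=0\) is trivial: \(A_1=1\), corresponding to the unknot.

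\emph{Conclusion.} Lemma~\ref{lem:symm-lc-trap} now gives that the coefficient sequence of \(A_{2n+1}\) is trapezoidal. By the normalization discussion in Section~\ref{sec:prelim}, these coefficients are, up to an index shift and a global sign, the absolute values of the Alexander coefficients of \(\Th(4,2n+1)\). This uses that the substitution \(t=-z\) only changes signs, and that multiplying by a Laurent unit \(\pm t^m\) only shifts indices. There is no real obstacle here, since all the substantive work sits in Proposition~\ref{prop:Dn-log-concave}. The only point needing care is the positivity hypothesis of Lemma~\ref{lem:symm-lc-trap}, which is handled by the observation above about the full-support geometric factor.
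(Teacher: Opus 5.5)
Your proposal is correct and is essentially the paper's own proof: the factorization from Corollary~\ref{cor:odd-factorization}, log-concavity of \(D_n\) from Proposition~\ref{prop:Dn-log-concave}, two applications of Theorem~\ref{thm:convolution}, reciprocity of all factors, and Lemma~\ref{lem:symm-lc-trap}. One small tightening concerns positivity: full support of \(1+z+\cdots+z^{2n}\) alone would not give it if \(D_n^2\) had a zero gap longer than \(2n\), but it follows at once from \(c_{r,n}>0\) for \(1\le r\le n\), or from the no-internal-zeros conclusion you already derive together with the end coefficients being \(1\).
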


\begin{proof}
By Corollary~\ref{cor:odd-factorization},
\[
        A_{2n+1}(z)=(1+z+\cdots+z^{2n})D_n(z)^2.
\]
The polynomial \(D_n(z)\) has positive log-concave coefficients by Proposition~\ref{prop:Dn-log-concave}.  Hence \(D_n(z)^2\) is log-concave by Theorem~\ref{thm:convolution}.  The coefficient sequence of \(1+z+\cdots+z^{2n}\) is log-concave and has no internal zeros, so another application of Theorem~\ref{thm:convolution} shows that \(A_{2n+1}(z)\) is log-concave.  All factors are reciprocal, so \(A_{2n+1}(z)\) is reciprocal.  Lemma~\ref{lem:symm-lc-trap} then gives trapezoidality.
\end{proof}

\begin{theorem}[Even exponent theorem]\label{thm:even-main}
For every \(n\ge1\), the polynomial \(A_{2n}(z)\) from \eqref{eq:even-factorization} has a positive reciprocal log-concave coefficient sequence.  Consequently, Fox's trapezoidal conjecture holds for \(\Th(4,2n)\).
\end{theorem}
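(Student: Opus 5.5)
The proof will mirror that of Theorem~\ref{thm:odd-main}, using the even factorization of Corollary~\ref{cor:even-factorization}:
\[
        A_{2n}(z)=(1+z+\cdots+z^{2n-1})(z^2+4z+1)E_n(z)^2.
\]
I will show that each factor has a positive log-concave coefficient sequence with no internal zeros, and then combine them with Theorem~\ref{thm:convolution}.

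The first step is the core factor. By Proposition~\ref{prop:En-log-concave}, $E_n(z)$ has positive log-concave coefficients with no internal zeros. Its coefficients are positive since each quadratic factor $1+a_{r,n}z+z^2$ has $a_{r,n}>0$ for $1\le r\le n-1$. Theorem~\ref{thm:convolution} then gives the same property for $E_n(z)^2$.

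The second step handles the two remaining factors. The fixed-root factor $1+4z+z^2$ has coefficient sequence $(1,4,1)$. It is log-concave because $16\ge1$, and it has no internal zeros. The geometric factor $1+z+\cdots+z^{2n-1}$ is a constant positive sequence, so it is log-concave with no internal zeros. Two further applications of Theorem~\ref{thm:convolution} show that $A_{2n}(z)$ has a positive log-concave coefficient sequence. This holds for every $n\ge1$; for $n=1$, $E_1=1$ and the product is $(1+z)(1+4z+z^2)$.

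The third step is reciprocity and the transfer to the link. Each factor is palindromic, so $A_{2n}(z)$ is reciprocal, and Lemma~\ref{lem:symm-lc-trap} makes its coefficient sequence trapezoidal. By Corollary~\ref{cor:even-factorization} together with the normalization discussion in Section~\ref{sec:prelim}, $\Delta_{\Th(4,2n)}(-z)\dd A_{2n}(z)$. The coefficients of $A_{2n}$ are therefore the absolute values of the Alexander coefficients of $\Th(4,2n)$, up to an index shift and a global sign, and Fox's trapezoidal conjecture follows for $\Th(4,2n)$. All the hard work sits in Proposition~\ref{prop:En-log-concave}, which we may assume, so I expect no real obstacle. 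The only point needing care is that the closed form of Corollary~\ref{cor:even-factorization} is used exactly, without dividing out any extra $(1-z)$ factor in the link case, as the normalization section stipulates.
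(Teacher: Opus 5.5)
Your proposal is correct and follows essentially the same route as the paper: the factorization of Corollary~\ref{cor:even-factorization}, Proposition~\ref{prop:En-log-concave} for $E_n$, and repeated use of Theorem~\ref{thm:convolution} for $E_n^2$, $1+4z+z^2$ and the geometric factor, followed by reciprocity and Lemma~\ref{lem:symm-lc-trap} for trapezoidality. Your extra remarks on the positivity of the coefficients of $E_n$ and on transferring the result to the Alexander coefficients of $\Th(4,2n)$ correspond to what the paper does in the proof of Theorem~\ref{thm:global-intro}.
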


\begin{proof}
By Corollary~\ref{cor:even-factorization},
\[
        A_{2n}(z)=(1+z+\cdots+z^{2n-1})(z^2+4z+1)E_n(z)^2.
\]
The polynomial \(E_n(z)\) has positive log-concave coefficients by Proposition~\ref{prop:En-log-concave}.  The coefficient sequences of \(1+z+\cdots+z^{2n-1}\) and \(z^2+4z+1\) are log-concave and have no internal zeros.  Repeated application of Theorem~\ref{thm:convolution} proves that \(A_{2n}(z)\) is log-concave.  All factors are reciprocal, so \(A_{2n}(z)\) is reciprocal.  Lemma~\ref{lem:symm-lc-trap} gives trapezoidality.
\end{proof}

\begin{proof}[Proof of Theorem~\ref{thm:global-intro}]
If \(q\) is odd, write \(q=2n+1\) and apply Theorem~\ref{thm:odd-main}.  If \(q\) is even, write \(q=2n\) and apply Theorem~\ref{thm:even-main}.  In both cases, Proposition~\ref{prop:uniform-factorization} identifies the positive polynomial \(A_q(z)\) with \(\Delta_{\Th(4,q)}(-z)\) up to a Laurent unit.  Hence the coefficient sequence of \(A_q(z)\) is, up to indexing and a global sign, the sequence of absolute values of the Alexander coefficients of \(\Th(4,q)\).  This proves the theorem for every \(q\ge1\).
\end{proof}

\begin{corollary}
Fox's trapezoidal conjecture holds for the complete four-strand Turk's head family \(\Th(4,q)\), \(q\ge1\).
\end{corollary}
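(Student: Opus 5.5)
The statement is the final corollary, which follows from the main theorem. The plan is to observe that Theorem~\ref{thm:global-intro} is already proved, and to translate it into the formulation of Fox's conjecture.

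Fix \(q\ge1\). By Proposition~\ref{prop:uniform-factorization} and Corollaries~\ref{cor:odd-factorization} and~\ref{cor:even-factorization},
\[
\Delta_{\Th(4,q)}(-z)\dd A_q(z).
\]
By Theorems~\ref{thm:odd-main} and~\ref{thm:even-main}, \(A_q(z)\) is a polynomial with positive coefficients that is reciprocal and log-concave.

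Write \(\Delta_{\Th(4,q)}(t)=\pm t^m\sum_k \alpha_k t^k\). Substituting \(t=-z\) gives
\[
\pm(-1)^m z^m\sum_k (-1)^k\alpha_k z^k.
\]
Comparing this with \(A_q(z)\), up to a Laurent unit, shows that \(|\alpha_k|\) equals the \(k\)-th coefficient of \(A_q\) after a shift of index. In particular, the \(\alpha_k\) alternate in sign, which is consistent with Crowell--Murasugi. The absolute values \(|\alpha_k|\) therefore form a positive, symmetric, log-concave sequence. By Lemma~\ref{lem:symm-lc-trap}, this sequence is trapezoidal, which is exactly Fox's trapezoidal conditions for the alternating link \(\Th(4,q)\).

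Both parities are covered, so the statement holds for every \(q\ge1\).

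The one point to check is the normalization. The polynomial \(\Delta\) in \eqref{eq:burau-formula-general} must be the ordinary one-variable Alexander polynomial used in Fox's conjecture, including in the link case where \(q\) is even; the paper specifies this convention. Also, a Laurent unit only shifts indices and flips the overall sign, and neither affects trapezoidality. Since no other step involves new mathematics, I expect no real obstacle beyond this bookkeeping.
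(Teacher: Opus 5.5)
Your proposal is correct and matches the paper's argument: the corollary is Theorem~\ref{thm:global-intro}, proved by identifying \(\Delta_{\Th(4,q)}(-z)\) with \(A_q(z)\) up to a Laurent unit (Proposition~\ref{prop:uniform-factorization}), invoking Theorems~\ref{thm:odd-main} and~\ref{thm:even-main}, and applying Lemma~\ref{lem:symm-lc-trap}. Your explicit tracking of how a unit \(\pm t^m\) becomes \(\pm z^m\) under \(t=-z\) just spells out the paper's remark that the coefficients of \(A_q\) are the absolute Alexander coefficients ``up to indexing and a global sign.''
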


\appendix

\section{Chebyshev forms and exact finite checks}\label{app:cheb}

The finite checks in Sections~\ref{sec:odd} and~\ref{sec:even} use exact polynomial recurrences, not numerical approximations.

\subsection{Odd core}

Let
\[
        F_n(u)=U_n\left(\frac u2\right)+U_{n-1}\left(\frac u2\right),
\]
where \(U_n\) is the Chebyshev polynomial of the second kind and \(U_{-1}=0\).  The roots of \(F_n\) are
\[
        2\cos\frac{2\pi r}{2n+1},
        \qquad 1\le r\le n.
\]
Consequently,
\[
        D_n(z)=z^n F_n\left(z+z^{-1}+2\right).
\]
Using the Chebyshev recurrence gives
\begin{equation}\label{eq:Dn-recurrence-app}
        D_0(z)=1,
        \qquad
        D_1(z)=z^2+3z+1,
\end{equation}
\begin{equation}\label{eq:Dn-recurrence-app2}
        D_n(z)=(1+z)^2D_{n-1}(z)-z^2D_{n-2}(z)
        \qquad(n\ge2).
\end{equation}
This recurrence determines the exact integer coefficients used in the table in Section~\ref{sec:odd}.

\subsection{Even core}

The roots of \(U_{n-1}(X/2)\) are \(2\cos(\pi r/n)\), \(1\le r\le n-1\).  Hence
\begin{equation}\label{eq:En-cheb}
        E_n(z)
        =z^{n-1}U_{n-1}\left(\frac{(1+z)^2}{2z}\right).
\end{equation}
Equivalently,
\begin{equation}\label{eq:En-binomial}
        E_n(z)
        =\sum_{j=0}^{\lfloor(n-1)/2\rfloor}
        (-1)^j\binom{n-1-j}{j}z^{2j}(1+z)^{2n-2-4j}.
\end{equation}
Formula \eqref{eq:En-binomial} gives the exact integer coefficients used in the table in Section~\ref{sec:even}.

\vspace{0.6cm}

\section*{Acknowledgements}
The author is grateful to Prof. Rama Mishra for her mentorship and support, and to Prof. Tamás Kálmán for his encouraging comments on an earlier version of this work and for suggesting the even powers \(\beta_{2n}\) as a natural next case to consider.

\bibliographystyle{alpha}
\bibliography{bibliography}
\end{document}